\newcommand{\N}{\mathbb N}
\newcommand{\R}{\mathbb R}
\newcommand{\C}{\mathbb C}
\newcommand{\T}{\mathbb T}
\begin{document}

\title{Holomorphic extendability in $\mathbf C^n$ as a rare phenomenon}
\author{Nikolaos Georgakopoulos}
\address{University of Athens, Department of Mathematics, 157 84 Panepistemiopolis, Athens, Greece}
\email{nikolaosgmath@gmail.com}

\theoremstyle{plain}

\newtheorem{prop}{Proposition}
\newtheorem{thm}[prop]{Theorem}
\newtheorem{cor}[prop]{Corollary}
\newtheorem{lem}[prop]{Lemma}

\theoremstyle{definition}
\newtheorem{defn}[prop]{Definition}
\newtheorem{rem}[prop]{Remark}
\newtheorem{que}[prop]{Question}
\newtheorem{nota}[prop]{Notation}

\begin{abstract} We consider various notions of holomorphic extendability of complex valued functions defined on subsets of $\C^n$, including one-sided extendability. We show that in the relevant function spaces, these 
phenomena of holomorphic extendability are rare in the topological sense, generalizing several results of \cite{BigPaper} in dimensions $n\ge 2$. \end{abstract}

\maketitle

\tableofcontents

\section{Introduction}

The authors of \cite{BigPaper} establish that in various spaces of functions defined on subsets of $\C$,  nowhere holomorphic extendability is a generic phenomenon. Starting with a domain $\Omega\subseteq \C$ defined by a finite number of pairwise disjoint Jordan curves, they prove that it is rare, in the topological sense, for a holomorphic function $f:\Omega\to \C$ to have a holomorphic extension at some boundary point $z\in \partial \Omega$. A holomorphic extension in this setting is a holomorphic function $F$ defined on an open neighborhood $U$ of $z$ such that $F=f$ over a connected component of $\Omega\cap U$. Using Baire's Theorem, they prove that the collection of functions $f$ with the previous property is a meager ($F_{\sigma}$ with empty interior) subset of the space $A^p(\Omega)$ for any  $p\in \{0,1,...\}\cup \{+\infty\}$. The space $A^p(\Omega)$ consists of all holomorphic functions $f:\Omega\to \C$ whose derivatives $f^{(l)}$, $l\in \N, l\le p$, have continuous extensions over the boundary $\partial \Omega$; its topology is induced by the seminorms $\sup_{z\in \Omega, |z|\le N}|f^{(l)}(z)|$ for each $N\in \N$ and $l\in \N$, $l\le p$.

Next, they prove the analogous result for the spaces $C^p(\T)$ of $C^p$ smooth functions $\T\to \C$, $\T$ being the unit circle in $\C$. Here, $p\in \{0,1,...\}\cup \{+\infty\}$ and extendability of $f\in C^p(\T)$ at $z\in \T$ amounts to the existence of a holomorphic $F$ defined on an open neighborhood $U\subseteq \C$ of $z$ so that $F=f$ over $\T\cap U$. Furthermore, they show that real analyticity of $f:\T\to \C$ at a point $e^{i\theta_0}$, that is the existence of a power series of a real variable $\theta$ converging to $f(e^{i\theta})$ for $\theta$ near $\theta_0$, is equivalent to holomorphic extendability of $f$ at $e^{i\theta_0}$. This implies that real analyticity is also a rare phenomenon in $C^p(\T)$. The results regarding the circle can be generalized to boundaries of Jordan domains, once the original argument is refined to prove the following: If $L\subseteq \C$ is a perfect set then the nowhere holomorphically extendable continuous functions $f:L\to \C$ form a dense $G_{\delta}$ subset of $C(L)$.

Finally, the authors of \cite{BigPaper} show that holomorphic extendability from one side is also a rare phenomenon, and proceed to define and study the $p$-continuous analytic capacities. These are related to the following problem: Fix a $p\in \{0,1,...\}\cup \{+\infty\}$, a domain $\Omega\subseteq \C$ and a compact set $L\subseteq \C$; then we obviously have $A^p(\Omega)\subseteq A^p(\Omega\setminus L)$ and the problem is to determine when $A^p(\Omega)=A^p(\Omega\setminus L)$. For $p=0$ the solution is given by the continuous analytic capacities: a compact set $L\subseteq \C$ has the previous property for every domain $\Omega$ containing it, if and only if its continuous analytic capacity is zero: $\alpha(L)=0$ (\cite{Garnett}). In \cite{BigPaper}, the authors prove that for any $p\in \{0,1,...\}\cup \{+\infty\}$ and compact $L$, $A^p(\Omega)=A^p(\Omega\setminus L)$ for all domains $\Omega\supseteq L$ if and only if $\alpha_p(L)=0$, $\alpha_p$ denoting the $p$-continuous analytic capacity that they define. They go on to show that these capacities are distinct, in the sense that there exist compact sets $L$ for which $\alpha_0(L)=0$ but $\alpha_1(L)\neq 0$.

In this article, we manage to extend several of the previous results in higher dimensions, where subsets of $\C^n$, $n\ge 1$, are used. At the outset, we are presented with a fundamental difficulty: Some of those results do not hold true in dimensions $n>1$. Indeed, Hartog's phenomenon (\cite{Gauthier}) shows the existence of open and connected sets $\Omega\subseteq \C^n$ where all holomorphic functions $f:\Omega\to \C$ have holomorphic extensions over some points in $\partial \Omega$. Hartog's figure is an elementary example showcasing this behavior (\cite{Gauthier}). Thus, it is apparent that we can not hope for a very straightforward extension of the one dimensional results in several dimensions; we will have to impose additional conditions on the subsets of $\C^n$ that we are using.

First of all, in section \ref{Extendability in domains}, we are concerned with extendability at the boundary, of functions defined on open and connected sets $\Omega\subseteq \C^n$. A Theorem in \cite{NestorAndDaras} can be used to show that if $\Omega$ is convex, then the collection of non-holomorphically extendable functions in $A^{\infty}(\Omega)$ contains a dense $G_{\delta}$ of $A^{\infty}(\Omega)$. In Theorem \ref{HomolomorphicExtendabilityAp} however, we prove that the aforementioned collection is actually a $G_{\delta}$ itself, for a more general class of sets $\Omega$. The condition we impose on $\Omega$ is related to being able to separate $\Omega$ from any point in $\overline{\Omega}^c$ via an $n-1$ dimensional complex hyperplane. This is quite more general than the convex case, and also trivially contains the one-dimensional case. In the rest of section \ref{Extendability in domains}, we examine for which $\Omega$ and compact sets $L\subseteq \Omega$, the equality $A^p(\Omega)=A^p(\Omega\setminus L)$ is true. If $\Omega\setminus L$ is connected then the answer, for $n\ge 2$, is affirmative by Hartog's Extension Theorem. But even if it is not connected, we always have a dichotomy: either $A^p(\Omega)=A^p(\Omega\setminus L)$ or $A^p(\Omega\setminus L)\setminus A^p(\Omega)$ is a dense $G_{\delta}$ of $A^p(\Omega\setminus L)$ (Theorem \ref{Dichotomy}). In other words, we can either extend all functions from $\Omega\setminus L$ to $\Omega$, or the functions that can be extended are topologically very few, constituting a meager set. We do not delve in a deeper classification, by defining analytic capacities in several variables, and we do not see if such an investigation is possible.

In section \ref{Real Analyticity on the Polydisc}, we exclusively treat the case of the $n$ dimensional open polydisc $D^n$, defining notions such as real analyticity and holomorphic extendability for functions defined on $\T^n$. For functions in $A^p(D^n)$ there are then two notions of holomorphic extendability, but we promptly establish their equivalence (Proposition \ref{ExtendabilityEquivalentDefinitions}). Real analyticity and holomorphic extendability are also shown to be equivalent, and from this it follows that they are topologically rare phenomena in $A^p(D^n)$ (Theorem \ref{RealAnalyticityinApDn}).

In section \ref{Extendability of functions defined on closed sets}, we consider extendability of functions defined on closed sets of $\C^n$, rather than domains. We impose two conditions on these closed sets:  The first is related to separating them from points in their complement using $n-1$ dimensional complex hyperplanes, as in section \ref{Extendability in domains}. The other condition ensures that we have limit points in every direction, enabling us to use the principle of analytic continuation in each complex variable separately. We can then prove Theorem \ref{TheoremWithClosedSet}: If $L$ is has the two aforementioned conditions, then the collection of non-holomorphically extendable functions $L\to \C$ is a dense $G_{\delta}$ subset of $C(L)$. This immediately implies that non-extendability is a generic phenomenon in the spaces of functions $C^p(\T^n)$, $C(\partial D^n)$ and $C(\partial B^n)$, where $B^n$ is the unit ball of $\C^n$ (we note that the distinguished boundary $\T^n$ of $D^n$ is a proper subset of the boundary $\partial D^n$ of $D^n$). Finally, we can also generalize the results regarding $\T^n$ to products of Jordan domains.

In the final section, we examine the case of one-sided holomorphic extendability. There are some additional technicalities to be considered and for this reason, we define three notions of one-sided extendability. The regular notion involves extensions by functions that are continuous on the boundary, but we also consider another notion where only separate continuity is assumed, and another where we assume Lipschitz continuity. We are able to show that for sets like the polydisc, the ball, the distinguished boundary of the polydisc and products of Jordan domains, the nowhere one-sided-extendable functions, in either the separate continuous or the Lipschitz continuous sense, form a dense $G_{\delta}$ subset of the relevant function spaces. The case of usual continuity is ``sandwiched'' between the other two, and thus the collection of nowhere one-sided-extendable functions in the usual sense both contains and is contained in a dense $G_{\delta}$. Therefore, it is a residual set, but it remains to be determined if it actually is a $G_{\delta}$ set.

\section{Extendability in domains of $\C^n$}\label{Extendability in domains}

Let us first fix some notation. The natural number $n$ always denotes the (complex) dimension of the ambient space $\C^n$. The set $D(z,r)$ denotes the open disc in $\C$ with center $z\in \C$ and radius $r>0$. The open unit disc is denoted by $D$ and the unit circle by $\T$. The set $B(z,r)$ denotes the open ball in $\C^n$ centered at $z\in \C^n$ with radius $r$; $B^n=B(0,1)$ is the unit ball in $\C^n$. We utilize the usual multi-index notation, where $\alpha=(a_1,...,a_n)\in \N^n$ is a multi-index, $|\alpha|=a_1+\cdots+a_n$, $\alpha!=a_1!...a_n!$, $z^{\alpha}=z_1^{a_1}\cdots z_n^{a_n}$ for $z=(z_1,...,z_n)$ and $\partial^a/\partial z^{\alpha}=\partial^{|\alpha|}/\partial z_1^{a_1}\cdots \partial z_n^{a_n}$.

Let $\Omega\subseteq \C^n$ be open and connected, and $p\in \{0,1,...\}\cup\{+\infty\}$. The space $A^p(\Omega)$ consists of all holomorphic functions $f$ on $\Omega$, whose partial derivatives 
\begin{equation}\frac{\partial^{\alpha}}{\partial z^{\alpha}}f\end{equation}
extend continuously over $\overline{\Omega}$, for every multi-index $\alpha\in \N^n$ with $|\alpha|\le p$. Its topology is given by the semi-norms
\begin{equation*}|f|_{\alpha,N}=\sup_{z\in \overline{\Omega}, \|z\|\le N}\left|\frac{\partial^{\alpha}}{\partial z^{\alpha}}f(z)\right|\text{ , }\alpha\in \N^n\text{ , }|\alpha|\le p\text{ , }N\in \N\end{equation*}
and it is a Fr\'echet space for all $p$. If $p<+\infty$ and $\Omega$ is bounded then it is a Banach space. For $p=0$ we denote $A(\Omega)=A^0(\Omega)$.

If $\{\alpha_j\}_{j\in \N}$ is an enumeration of $\N^n$ via multi-indices, we denote
\begin{equation}S_N(f,\zeta)(z)=\sum_{j=0}^N\frac{\partial^{\alpha_j}f}{\partial z^{\alpha_j}}\frac{(z-\zeta)^{\alpha_j}}{\alpha_j!}\end{equation}
for $\zeta,z\in \Omega$, $N\in \N$, $f$ holomorphic on $\Omega$.

\begin{thm}\label{NestorAndDaras}\cite{NestorAndDaras} If $\Omega$ is convex, then there is an $f\in A^{\infty}(\Omega)$ such that for every convex and compact set $K\subseteq \overline{\Omega}^c$, and every polynomial $h$ in $n$ complex variables, there is a sequence $N_k\in \{0,1,...\}$ such that $S_{N_k}(f,\zeta)(z)\to h(z)$ uniformly for $(\zeta,z)\in L\times K$ for every compact $L\subseteq \overline{\Omega}$. The collection of all such $f$ is a dense $G_{\delta}$ in  $A^{\infty}(\Omega)$.\end{thm}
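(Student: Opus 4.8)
To prove the statement, the plan is a Baire category argument in the Fr\'echet space $A^\infty(\Omega)$. After a translation we may assume $0\in\Omega$. Fix a countable family $\{h_j\}_{j\in\N}$ of polynomials in $n$ complex variables which is dense, for uniform convergence on compact subsets of $\C^n$, in the space of all such polynomials (e.g.\ those with coefficients in $\mathbb{Q}+i\mathbb{Q}$); fix the exhaustion $L_m=\overline{\Omega}\cap\overline{B(0,m)}$ of $\overline{\Omega}$ by convex compact sets; and fix a countable family $\{K_s\}_{s\in\N}$ of convex compact subsets of $\overline{\Omega}^c$ that is \emph{cofinal}, i.e.\ every convex compact $K\subseteq\overline{\Omega}^c$ lies in some $K_s$. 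Such a family exists: given $K$, convexity of $\overline{\Omega}$ together with disjointness from the compact set $K$ produces a strictly separating real hyperplane, and covering $K$ by finitely many balls with data in $\mathbb{Q}+i\mathbb{Q}$ lying in a thin convex slab around $K$ on its own side of that hyperplane and taking the convex hull yields a convex compact subset of $\overline{\Omega}^c$ containing $K$; the collection of all such hulls is countable. For $s,j\in\N$ and $h\in\{h_j\}$ put
\begin{equation*}
V(s,j,h)=\bigcup_{N\ge0}\Bigl\{\,f\in A^\infty(\Omega)\ :\ \sup_{(\zeta,z)\in L_j\times K_s}\bigl|S_N(f,\zeta)(z)-h(z)\bigr|<\tfrac1j\,\Bigr\}.
\end{equation*}
For fixed $N$ the map sending $f$ to the restriction of $\partial^{\alpha}f/\partial z^{\alpha}$ to $L_j$ is continuous into $C(L_j)$ (it is dominated by the single seminorm $|\cdot|_{\alpha,j}$), hence $f\mapsto S_N(f,\cdot)(\cdot)\in C(L_j\times K_s)$ is continuous and each $V(s,j,h)$ is open. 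A routine unravelling shows that $\bigcap_{s,j,h}V(s,j,h)$ is precisely the set of $f\in A^\infty(\Omega)$ with the property described in the statement: one uses that $\{K_s\}$ is cofinal, that $\{h_j\}$ is uniformly dense on compacta, and that in $V(s,j,h)$ the tolerance $1/j$ is coupled to the growing truncation $L_j$, so that a single sequence $(N_k)$ automatically serves every compact $L\subseteq\overline{\Omega}$ at once. In particular that set is $G_\delta$, and once each $V(s,j,h)$ is shown to be dense, Baire's theorem gives the conclusion.

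Density is the heart of the matter. Fix $(s,j,h)$, a function $g\in A^\infty(\Omega)$, and a basic neighbourhood of $g$ determined by finitely many multi-indices $|\alpha|\le q$, a radius $\delta>0$, and an integer $N_0$. It suffices to produce a \emph{polynomial} $P$ with $|P-g|_{\alpha,N_0}<\delta$ for all those $\alpha$ and with $\sup_{K_s}|P-h|<\tfrac1j$: indeed, if $P$ has degree $d$, then for every $N$ large enough that $\alpha_0,\dots,\alpha_N$ exhaust all multi-indices of degree $\le d$ one has $S_N(P,\zeta)(z)=P(z)$ for all $\zeta,z$, by the terminating Taylor formula for polynomials, so the supremum over $L_j\times K_s$ in the definition of $V(s,j,h)$ collapses to $\sup_{z\in K_s}|P(z)-h(z)|<\tfrac1j$ and $f=P$ lies in $V(s,j,h)$. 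To build $P$, first replace $g$ by a dilate $g_r(z)=g(rz)$ with $r\in(0,1)$ close to $1$: since $0\in\Omega$ and $\Omega$ is convex, $\overline{\Omega}\subseteq\tfrac1r\Omega$, so $g_r$ is holomorphic on a neighbourhood of $\overline{\Omega}$, and $g_r\to g$ in $A^\infty(\Omega)$ as $r\to1$ (the partial derivatives converge uniformly on each $L_m$). Next choose a convex compact $L''$ — a thin convex neighbourhood of $\overline{\Omega}$ truncated to a large ball — with $L''\subseteq\tfrac1r\Omega$, $L''\cap K_s=\emptyset$, and $\overline{\Omega}\cap\overline{B(0,N_0)}$ contained in the interior of $L''$. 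Now $L''$ and $K_s$ are disjoint compact convex sets, hence each is polynomially convex and, by Kallin's lemma applied with a real hyperplane separating them, so is $L''\cup K_s$. The function equal to $g_r$ near $L''$ and to $h$ near $K_s$ is holomorphic on a neighbourhood of $L''\cup K_s$, so by the Oka--Weil theorem it is approximated uniformly on $L''\cup K_s$ by a polynomial $P$; thus $\sup_{L''}|P-g_r|$ and $\sup_{K_s}|P-h|$ can be made as small as we wish. Cauchy estimates on polydiscs of a fixed radius that fit inside $L''$ around the points of $\overline{\Omega}\cap\overline{B(0,N_0)}$ upgrade the bound on $\sup_{L''}|P-g_r|$ to a bound on $\max_{|\alpha|\le q}\sup_{\overline{\Omega}\cap\overline{B(0,N_0)}}|\partial^{\alpha}(P-g_r)|$, and combining with $g_r\to g$ we arrange $|P-g|_{\alpha,N_0}<\delta$ for $|\alpha|\le q$ while keeping $\sup_{K_s}|P-h|<\tfrac1j$. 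This is the $P$ we sought, so $V(s,j,h)$ is dense and the proof is complete.

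I expect the main obstacle to be exactly this simultaneous polynomial approximation: one must approximate $g$ together with finitely many derivatives on a truncation of $\overline{\Omega}$ while at the same time forcing the polynomial to be close to a prescribed $h$ on the convex compact $K_s$ in the complement. Convexity of $\Omega$ enters twice — through the dilation trick, which places $g$ inside a genuine open neighbourhood of $\overline{\Omega}$ so that derivatives are controlled by Cauchy estimates, and through polynomial convexity of the truncated sets $\overline{\Omega}\cap\overline{B(0,N)}$ — while convexity of $K$ supplies its polynomial convexity and the separating hyperplane that makes $L''\cup K_s$ polynomially convex; it is precisely this polynomial convexity that avoids Hartogs-type obstructions and lets Oka--Weil perform the gluing. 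The remaining ingredients — openness of the sets $V(s,j,h)$, the construction of a cofinal family $\{K_s\}$, and the extraction of one sequence $(N_k)$ valid for every compact $L\subseteq\overline{\Omega}$ — are routine.
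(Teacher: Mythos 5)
The paper does not prove Theorem \ref{NestorAndDaras}: it is imported verbatim from \cite{NestorAndDaras}, so there is no internal argument to compare yours against. Your proof is correct and follows the standard universal-Taylor-series scheme that the cited reference also uses — a Baire category argument whose density step rests on Oka--Weil approximation on the polynomially convex union $L''\cup K_s$ of two disjoint compact convex sets (separating real hyperplane plus Kallin's lemma), with the dilation $g_r(z)=g(rz)$ supplying holomorphy past $\overline{\Omega}$ and Cauchy estimates upgrading uniform closeness on $L''$ to closeness of the finitely many derivatives controlling the $A^{\infty}(\Omega)$ seminorms.
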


A holomorphic function $f$ on $\Omega$ is holomorphically extendable at $z\in \partial \Omega$ if there is an open set $U\subseteq \C^n$ containing $z$ and a holomorphic function $F$ on $U$ such that $F=f$ on a connected component of $U\cap \Omega$. The function $f$ is non-holomorphically extendable if it is not holomorphically extendable at any point of the boundary. Note that if $f$ is non-holomorphically extendable at countably many points $z_k$ and the sequence $z_k$ is dense in $\partial \Omega$, then $f$ is non-holomorphically extendable. 

If $f$ is holomorphic over an open polydisc centered at $\zeta$, then the Taylor expansion of $f$ around $\zeta$ converges absolutely and uniformly to $f$ over the compact subsets of that  open polydisc. If $f\in A^{\infty}(\Omega)$ satisfies the conclusion of Theorem \ref{NestorAndDaras}, then $f$ is non-holomorphically extendable. Therefore,

\begin{thm}\label{WeakTheoremNestorDalas}If $\Omega\subseteq \C^n$ is open and convex, then the collection of non-holomorphically extendable functions in $A^{\infty}(\Omega)$ is residual: it contains a dense $G_{\delta}$ subset.\end{thm}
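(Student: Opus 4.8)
The plan is to combine the genericity supplied by Theorem~\ref{NestorAndDaras} with the elementary fact, already announced in the paragraph preceding the statement, that the ``universal'' functions produced there cannot be holomorphically extended anywhere. Write $\mathcal U\subseteq A^{\infty}(\Omega)$ for the dense $G_\delta$ furnished by Theorem~\ref{NestorAndDaras}. It then suffices to establish the inclusion of $\mathcal U$ in the set of non-holomorphically extendable functions of $A^{\infty}(\Omega)$; residuality is immediate afterwards, since a set containing a dense $G_\delta$ is residual.

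So let $f\in\mathcal U$ and suppose, toward a contradiction, that $f$ is holomorphically extendable at some $z_0\in\partial\Omega$. Using convexity of $\Omega$ I would first put the extension data in normalized form: there is an open ball $U$ centered at $z_0$ and a holomorphic $F$ on $U$ with $F=f$ on $U\cap\Omega$, the latter set being connected because it is an intersection of two convex sets. (Passing from the general definition to this normalized form is routine: restrict a given neighborhood to a small ball about $z_0$ contained in it, and use that $z_0$ lies in the closure of the component of the intersection on which $F=f$.) Since $\Omega$ is convex and open, $\operatorname{int}\overline\Omega=\Omega$, hence $z_0\notin\operatorname{int}\overline\Omega$, so every neighborhood of $z_0$ meets the open set $(\overline\Omega)^c$.

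The heart of the argument is a clash between two convergence regimes for the partial Taylor sums $S_N(f,\zeta)$. Choose a point $\zeta\in U\cap\Omega$ sufficiently close to $z_0$, together with an open polydisc $P$ centered at $\zeta$ with $\overline{P}\subseteq U$, small enough that $P\cap(\overline\Omega)^c$ contains a nondegenerate closed ball $K$; such a choice exists by the last observation of the previous paragraph. On the one hand, $f$ and $F$ agree on the open set $U\cap\Omega\ni\zeta$, so all their partial derivatives coincide at $\zeta$ and hence $S_N(f,\zeta)=S_N(F,\zeta)$ as polynomials; since $F$ is holomorphic on the open polydisc $P$, its Taylor expansion at $\zeta$ converges to $F$ uniformly on compact subsets of $P$, so $S_N(f,\zeta)\to F$ uniformly on $K$, and therefore so does every subsequence of $(S_N(f,\zeta))_N$. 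On the other hand, Theorem~\ref{NestorAndDaras} applied with $L=\{\zeta\}\subseteq\overline\Omega$, the convex compact set $K\subseteq(\overline\Omega)^c$, and the constant polynomial $h\equiv 0$ produces a subsequence $S_{N_k}(f,\zeta)\to 0$ uniformly on $K$; comparing with the previous sentence gives $F\equiv 0$ on $K$. Repeating with $h\equiv 1$ gives $F\equiv 1$ on $K$, which is absurd.

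Hence $f$ is not holomorphically extendable at any boundary point, which gives the desired inclusion and completes the proof. The only point requiring genuine care is the geometric bookkeeping in the third paragraph --- producing a single polydisc that is centered inside $\Omega$, sits inside $U$, and nevertheless protrudes into $(\overline\Omega)^c$ --- and this is precisely where convexity enters, via $\operatorname{int}\overline\Omega=\Omega$ together with the connectedness of $U\cap\Omega$; everything else is the standard identity-theorem-plus-Taylor-series routine, and in fact only the constant polynomials $h\equiv 0$ and $h\equiv 1$ from Theorem~\ref{NestorAndDaras} are needed.
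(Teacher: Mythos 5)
Your proof is correct and takes essentially the same route as the paper, which compresses the whole argument into the two sentences preceding the statement: any $f$ satisfying the conclusion of Theorem~\ref{NestorAndDaras} is non-holomorphically extendable, because a local extension would force the partial Taylor sums $S_N(f,\zeta)$ to converge on a compact convex subset of $\overline{\Omega}^c$, which is incompatible with subsequences of them approximating two distinct constant polynomials there. You have simply supplied in full the normalization and geometric bookkeeping that the paper leaves implicit.
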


We can in fact prove that this collection is a $G_{\delta}$ set, and this applies to more general sets $\Omega$, that satisfy the following condition:

\begin{itemize}\item[*] $\Omega\subseteq \C^n$ is open and connected and for every point $\zeta\in \partial \Omega$ there is a point $w\in \overline{\Omega}^c$ arbitrarily close to $\zeta$ and a non zero vector $v\in \C^n$ so that the $n-1$ dimensional complex hyperplane $H=w+\{v\}^{\perp}=\{z\in \C^n:\langle z,v\rangle=\langle w,v\rangle\}$ in $\C^n$ is disjoint from $\overline{\Omega}$. Here, $\langle \cdot,\cdot\rangle$ denotes of course the usual complex inner product.
\end{itemize}
This condition will be referred to as the ``star condition'' from now on. If $\Omega$ satisfies it, then we must have $\partial \Omega=\partial \overline{\Omega}$, which is also equivalent to $\Omega=\left(\overline{\Omega}\right)^{\circ}$. In dimension $n=1$, every domain $\Omega\subseteq \C$ such that $\partial \Omega=\partial \overline{\Omega}$, satisfies the star condition, as we can use $H=\{w\}$. In higher dimensions, we can prove the following:

\begin{lem}Every open and convex $\Omega\subseteq \C^n$ satisfies the star condition.\end{lem}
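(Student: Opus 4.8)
The plan is to reduce the lemma to two classical facts about convex sets: that $\operatorname{int}(\overline\Omega)=\Omega$ whenever $\Omega$ is a nonempty open convex set (so in particular $\partial\Omega=\partial\overline\Omega$), and the supporting hyperplane theorem for the closed convex set $\overline\Omega$. First I would dispose of the trivial cases $\Omega=\emptyset$ and $\Omega=\C^n$, where $\partial\Omega=\emptyset$ and there is nothing to check. Then, fixing $\zeta\in\partial\Omega=\partial\overline\Omega$ and $\varepsilon>0$, I would apply the supporting hyperplane theorem to $\overline\Omega$ at $\zeta$ to obtain a nonzero $u\in\C^n$ with $\operatorname{Re}\langle z,u\rangle\le\operatorname{Re}\langle\zeta,u\rangle$ for all $z\in\overline\Omega$; here I use that every real-linear functional on $\C^n$ has the form $z\mapsto\operatorname{Re}\langle z,u\rangle$ for a unique $u$.

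The key observation is that the required complex normal can be taken to be exactly this real supporting normal, $v:=u$. Concretely, for a small real $t>0$ I would set $w:=\zeta+(t/\|u\|^{2})\,u$, so that $\|w-\zeta\|=t/\|u\|<\varepsilon$ once $t$ is small enough, and consider the complex hyperplane $H=w+\{u\}^{\perp}=\{z\in\C^n:\langle z,u\rangle=\langle w,u\rangle\}$. Since $\langle w,u\rangle=\langle\zeta,u\rangle+t$, any point $z\in H\cap\overline\Omega$ would satisfy $\operatorname{Re}\langle z,u\rangle=\operatorname{Re}\langle\zeta,u\rangle+t>\operatorname{Re}\langle\zeta,u\rangle$, contradicting the supporting inequality. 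Hence $H\cap\overline\Omega=\emptyset$; in particular $w\in H$ forces $w\in\overline\Omega^{\,c}$, so $w$ and $v=u$ witness the star condition for $\zeta$ at scale $\varepsilon$. As $\zeta\in\partial\Omega$ and $\varepsilon>0$ were arbitrary, $\Omega$ satisfies the star condition.

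I do not expect a serious obstacle; the only real idea is to notice that one need not strictly separate $\{w\}$ from $\overline\Omega$ by a complex (codimension-one) hyperplane — which would generally be impossible — but only place a complex hyperplane strictly on the far side of a real supporting hyperplane, which is achieved by nudging $\zeta$ off $\overline\Omega$ along $u$ and taking the complex hyperplane through the resulting point with normal $u$. The remaining points (identifying real functionals on $\C^n$ with $z\mapsto\operatorname{Re}\langle z,u\rangle$, the computation $\langle w,u\rangle=\langle\zeta,u\rangle+t$, and citing $\operatorname{int}(\overline\Omega)=\Omega$ and the supporting hyperplane theorem, e.g.\ from Rockafellar's \emph{Convex Analysis}) are routine.
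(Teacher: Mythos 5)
Your proof is correct, but it reaches the separating complex hyperplane by a different route than the paper. The paper works from an arbitrary exterior point $w\in\overline{\Omega}^c$ (which exists arbitrarily close to any $\zeta\in\partial\Omega$ because $\Omega=(\overline{\Omega})^\circ$ for open convex sets): it takes $v$ to be the nearest point of $\overline{\Omega}$ to $w$, puts the complex hyperplane $H=w+\{v-w\}^{\perp}$ through $w$ itself, and shows $H\cap\overline{\Omega}=\emptyset$ by a self-contained Pythagorean computation along the segment from a hypothetical intersection point to $v$. You instead invoke the supporting hyperplane theorem at the boundary point $\zeta$ to get a real functional $z\mapsto\Re\langle z,u\rangle$ bounded on $\overline{\Omega}$, and then push the associated complex hyperplane a distance $t/\|u\|$ outward along $u$ so that it clears $\overline{\Omega}$ entirely. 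The shared key observation, which you state explicitly and the paper uses implicitly, is that the complex hyperplane $\{z:\langle z,u\rangle=c\}$ sits inside the real hyperplane $\{z:\Re\langle z,u\rangle=\Re c\}$, so real separation suffices. What each buys: your argument is shorter and delegates the convexity work to a standard cited theorem, whereas the paper's argument is self-contained (it essentially reproves strict separation via nearest-point projection) and establishes the slightly stronger fact that \emph{every} point of $\overline{\Omega}^c$ lies on a complex hyperplane disjoint from $\overline{\Omega}$, not just points obtained by nudging a boundary point along a normal. Both correctly yield the star condition.
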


\begin{proof}First, it is well known that $\Omega=\left(\overline{\Omega}\right)^{\circ}$ for open and convex $\Omega$. It thus suffices to construct for every $w\in \overline{\Omega}^c$ an $n-1$ dimensional complex hyperplane $H$ that contains $w$ and is disjoint from $C=\overline{\Omega}$. After a translation we may assume $w=0$; if $v$ is the nearest point of $C$ to the origin, we set $H=\{v\}^{\perp}$. If there is some $z\in H\cap C$ then 
for any $t\in (0,1)$, we have $(1-t)z+tv\in C$ and $z\perp v$ hence by the Pythagorean Theorem,
\begin{equation*}\|v\|^2\le \|(1-t)z+tv\|^2=(1-t)^2\|z\|^2+t^2\|v\|^2\implies \sqrt{\frac{1+t}{1-t}}\|v\|\le \|z\|\end{equation*}
Since $0\notin C$, we have that $v\neq 0$ so sending $t\to 1$ gives that $\|z\|=+\infty$, contradiction. Therefore, $H\cap C=\emptyset$ and the proof is complete.\end{proof}

The converse to this is clearly not true: In dimension $1$, every domain $\Omega$ such that $\partial\Omega=\partial\overline{\Omega}$ satisfies the star condition but is not necessarily convex. In dimension $2$, the set $\Omega=\{(z,w)\in \C^2:1<|z|<2\}$ is not convex, but for any point $(z_1,z_2)\notin \overline{\Omega}$, the line $z=z_1$ passes through $(z_1,z_2)$ and does not intersect $\overline{\Omega}$ (and clearly, $\partial \Omega=\partial \overline{\Omega}$).

\begin{thm}\label{HomolomorphicExtendabilityAp}If $p\in \{0,1...\}\cup \{+\infty\}$ and $\Omega\subseteq \C^n$  satisfies the star condition then
\begin{itemize}\item[1.] For any $\zeta\in \partial \Omega$, the collection of functions in $A^p(\Omega)$ that are not holomorphically extendable at $\zeta$ is a dense $G_{\delta}$ subset of $A^p(\Omega)$.\smallbreak
\item[2.] The collection of non-holomorphically extendable functions in $A^p(\Omega)$ is a dense $G_{\delta}$ subset of $A^p(\Omega)$.\end{itemize}
In particular, items 1. and 2. hold for open and convex $\Omega$.\end{thm}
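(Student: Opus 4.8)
The plan is to prove part~1 and to deduce part~2 from it. Fix $\zeta\in\partial\Omega$ and let $E\subseteq A^p(\Omega)$ be the set of functions holomorphically extendable at $\zeta$. The strategy is to write $E$ as a countable union of closed sets of empty interior; since $A^p(\Omega)$ is Fr\'echet, hence Baire, the complement of $E$ --- the functions not extendable at $\zeta$ --- is then a dense $G_\delta$. For the decomposition, for each open ball $B$ with rational centre and radius with $\zeta\in B$, each connected component $V$ of $B\cap\Omega$, and each $k\in\N$, set
\[E_{B,V,k}=\Bigl\{f\in A^p(\Omega):\ \text{there is}\ F\in\mathcal O(B)\ \text{with}\ \sup_{B}|F|\le k\ \text{and}\ F|_{V}=f|_{V}\Bigr\}.\]
Countably many balls $B$ occur, each $B\cap\Omega\subseteq\R^{2n}$ has at most countably many components, so $\bigcup_{B,V,k}E_{B,V,k}$ is a countable union, and I claim it equals $E$. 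One inclusion is immediate (take $U=B$ in the definition of extendability). For the other, given a witness $(U,F,V_0)$ for $f\in E$, shrink $U$ to a small rational ball $B$ with $\zeta\in B$ and $\overline B\subseteq U$; since the witnessing component may be taken to accumulate at $\zeta$, it meets $B$, so $B\cap\Omega$ has a component $V\subseteq V_0$, on which $F=f$, and $F$ is bounded on $\overline B$ by some $k\in\N$. (Matching the component $V$ of the small ball to the given $V_0$ is the fiddly bookkeeping step, but presents no real difficulty.)

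\emph{Closedness of $E_{B,V,k}$.} If $f_n\to f$ in $A^p(\Omega)$ with witnesses $F_n$, then $\{F_n\}$ is uniformly bounded by $k$ on $B$, so by Montel's theorem a subsequence converges, locally uniformly on $B$, to some $F\in\mathcal O(B)$ with $\sup_B|F|\le k$; since the $C^0$-seminorms of $A^p(\Omega)$ force $f_n\to f$ pointwise on $V$, we get $F|_V=f|_V$, hence $f\in E_{B,V,k}$.

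\emph{Empty interior of $E_{B,V,k}$ --- the crucial step, using the star condition.} Suppose, for contradiction, that a basic neighbourhood $\{g:\ |g-f_0|_{\alpha_i,N_i}<\varepsilon,\ i=1,\dots,r\}$ lies inside $E_{B,V,k}$, and let $F_0$ be a witness for $f_0$. By the star condition, choose $w\in\overline{\Omega}^c$ so close to $\zeta$ that $w\in B$, together with $v\ne0$ such that $H=\{z:\langle z,v\rangle=\langle w,v\rangle\}$ is disjoint from $\overline\Omega$. Put $\ell(z)=\langle z-w,v\rangle$, so $\ell$ is holomorphic, $\ell^{-1}(0)=H\ni w$, and $\ell^{-1}$ is holomorphic on $\C^n\setminus H$, an open set containing $\overline\Omega$; thus $\ell^{-1}|_\Omega\in A^p(\Omega)$ with all its seminorms finite. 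Pick $c>0$ small enough that $g:=f_0+c\,\ell^{-1}$ lies in the neighbourhood above; then $g\in E_{B,V,k}$ with some witness $F_g$, and $(F_g-F_0)|_V=c\,\ell^{-1}|_V$. Now $F_g-F_0\in\mathcal O(B)$, while $c\,\ell^{-1}\in\mathcal O(B\setminus H)$; the key observation is that the complex hyperplane $H$ has real codimension $2$, so $B\setminus H$ is connected and contains the nonempty open set $V$. By the identity theorem $F_g-F_0=c\,\ell^{-1}$ throughout $B\setminus H$, so $c\,\ell^{-1}$ extends holomorphically to $F_g-F_0$ across $H$; but $\ell^{-1}$ is unbounded near $w\in H\cap B$, a contradiction. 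Hence $E_{B,V,k}$ has empty interior, and part~1 follows.

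For part~2, take a countable dense set $\{\zeta_j\}\subseteq\partial\Omega$; by the observation recorded after the definition of extendability (non-extendability at a countable dense subset of $\partial\Omega$ implies non-extendability everywhere), the set of non-holomorphically-extendable functions is exactly $\bigcap_j\{f:\ f\ \text{not extendable at}\ \zeta_j\}$, a countable intersection of the dense $G_\delta$ sets furnished by part~1, hence itself a dense $G_\delta$ by Baire. The final assertion of the theorem is immediate from the preceding lemma. The substantive step is the empty-interior argument: it is the one-variable mechanism (a function with a pole arbitrarily close to $\zeta$ cannot be holomorphically erased near $\zeta$) transported to $\C^n$ by replacing the pole at a point with a pole along a complex hyperplane $H$, which --- being of real codimension $2$ --- leaves $B\setminus H$ connected, so that the identity theorem still forces the offending singularity to persist.
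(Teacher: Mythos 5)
Your proposal is correct and follows essentially the same route as the paper: the same decomposition of the extendable functions into countably many sets $S_{M,r,V}$ (closed by Montel's theorem, of empty interior by perturbing with $\delta/\langle z-w,v\rangle$, whose pole along the hyperplane $H$ survives because $B\setminus H$ is connected in real codimension $2$), followed by Baire's theorem for both parts. The only differences are cosmetic (rational balls containing $\zeta$ instead of balls centered at $\zeta$), and the component-matching subtlety you flag is glossed over in the paper as well.
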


\begin{proof}The second item follows immediately from the first, by utilizing a dense countable sequence of points $\zeta\in \partial \Omega$ where $f$ is not holomorphically extendable, in conjunction with Baire's Theorem. So we shall prove the first statement.

Let $M,r\in (0,+\infty)$ and $V$ be a connected component of $B(\zeta,r)\cap \Omega$. We define $S=S_{M,r,V}$ to be the collection of all $f\in A^p(\Omega)$ for which there is a holomorphic $F$ on $B(\zeta,r)$ bounded by $M$ and agreeing with $f$ over $V$. We will show that $S$ is closed with empty interior in $A^p(\Omega)$.

To show that $S$ is closed, let $f_m\in S$ converge to $f$ in the topology of $A^p(\Omega)$. There are holomorphic $F_m$ on $B(\zeta,r)$ agreeing with $f_m$ on $V$ and bounded by $M$. By Montel's Theorem (\cite{Gauthier}), there is a subsequence $F_{k_m}$ and a holomorphic function $F$ on $B(\zeta,r)$ such that $F_{k_m}\to F$ uniformly over all compact subsets of $B(\zeta,r)$. Since $F_{k_m}=f_{k_m}$ over $V$, it follows that $F=f$ over $V$, hence $f\in S$ ($F$ is bounded by $M$ as the point-wise limit of functions $F_{k_m}$ bounded by $M$).

To show that $S$ has empty interior, let us assume on the contrary that it has an interior point $f$. Then there is some $\epsilon>0$ and $l,N\in \N,l\le p$, so that if $g\in A^p(\Omega)$ satisfies
\begin{equation}\label{DistanceOfFunctions1}\sup_{z\in \Omega, \|z\|\le N}\left|\frac{\partial^{\alpha}f}{\partial z^{\alpha}}(z)-\frac{\partial^{\alpha}g}{\partial z^{\alpha}}(z)\right|<\epsilon\text{ , }\forall \alpha\in \N^n\text{ , }|\alpha|\le l\end{equation}
then $g\in S$. By the star condition, there is a $w\in B(\zeta,r)\setminus \overline{\Omega}$ and an $n-1$ dimensional complex hyperplane containing $w$ but not intersecting $\overline{\Omega}$: there are $v_i\in \C$ not all zero, such that $H=\{(z_1,...,z_n)\in \C^n:(z_1-w_1)v_1+\cdots+(z_n-w_n)v_n=0\}$ does not intersect $\overline{\Omega}$. Consider $h:H^c\to \C$,
\begin{equation}h(z)=\frac{\delta}{(z_1-w_1)v_1+\cdots+(z_n-w_n)v_n}\end{equation}
for small $\delta>0$. If $g=f+h$, then clearly $g\in A^p(\Omega)$, and for small enough $\delta$, $\eqref{DistanceOfFunctions1}$ is satisfied, hence $g\in S$. If $F,G$ are holomorphic on $B(\zeta,r)$ and agree with $f,g$ over $V$, then $G-F=h$ over $V$. The set $B(\zeta,r)\setminus H$ is connected, which can be seen by counting real dimensions: $B(\zeta,r)$ is the unit ball in $\R^{2n}$ so $2n$ dimensional, while $H$ is only $2n-2$ dimensional. Therefore, since $G-F=h$ over an open subset of $B(\zeta,r)\setminus H$, it follows that $G-F=h$ over $B(\zeta,r)\setminus H$. But because $G,F$ are bounded by $M$, this implies that $h$ is bounded over $B(\zeta,r)\setminus H$, contradicting that $h(z)$ is unbounded as $z\to w$, $z\notin H$.

So far, we have proven that $S_{M,r,V}$ is closed with empty interior in $A^p(\Omega)$. The set of functions in $A^p(\Omega)$ that are holomorphically extendable at $\zeta$ is $A=\cup_{M>0, r>0, V}S_{M,r}$, $V$ ranging over the countably many connected components of $B(\zeta,r)\cap \Omega$. We may write this as a countable union, using $M=1/m, r=1/k$, $m,k\in \N$ and by enumerating the components $V$; in this way, the complement $A^c$ is the countable intersection of dense open sets in the complete metric space $A^p(\Omega)$. By Baire's Theorem, $A^c$ is a dense $G_{\delta}$ subset of $A^p(\Omega)$. The set $A^c$ is the collection of functions that are not holomorphically extendable at $\zeta$.\end{proof}

In dimension $n=1$, every domain $\Omega$ such that $\Omega=\left(\overline{\Omega}\right)^{\circ}$ satisfies the hypothesis of this Theorem hence its conclusion as well. This generalizes Theorem 5.12 of \cite{BigPaper} which is stated only for bounded domains $\Omega\subseteq \C$ defined by a finite number of pairwise disjoint Jordan curves. In dimensions $n\ge 2$, there are domains $\Omega$ such as Hartog's figure for which the conclusion of Theorem \ref{HomolomorphicExtendabilityAp} is not true.

\begin{rem}We also note that the previous proof works with a different definition of holomorphic extendability. In this alternate definition, we require the holomorphic extension $F$ to agree with the original function $f$ over the entire set $B(\zeta,r)\cap \Omega$ and not just a connected component. The proof that the functions in $A^p(\Omega)$ that are not holomorphically extendable in this sense at a point (or at all points) of $\partial \Omega$, is similar to the proof above and simpler.

This alternate definition is equivalent to the one given before for convex $\Omega$ (the intersection of convex sets is convex hence connected). They are not equivalent however, if $\Omega$ only has the star condition, even in dimension $n=1$. Indeed, the set $\Omega=\cup_{m=1}^{+\infty}\{z\in \C:1/(m+1)<\Re z<1/m\}\cup \{z\in \C: Im z>1\}\cup \{z\in \C: \Re z<0\}$ satisfies the star condition, but $0\in \partial \Omega$ and $B(0,1/2)\cap \Omega$ is disconnected.
\end{rem}

Now let $\Omega$ be an open subset of $\C^n$, $L$ a compact subset of $\Omega$ and $G=\Omega\setminus L$. A function $f\in A^p(G)$ may have an extension in $A^p(\Omega)$ (that is necessarily unique) or it may not. 

If $G$ is connected and $n\ge 2$, then Hartog's Extension Theorem applies, so $f$ has a holomorphic extension over $\Omega$. The function $f\in A^p(G)$ has by definition a $C^p$ smooth extension over $\overline G$, which contains $\partial \Omega$ because $L$ is compact and $\Omega$ is open, hence a holomorphic extension of $f$ over $\Omega$ will also be in $A^p(\Omega)$. We conclude that if $G$ is connected and $n>1$ then every function in $A^p(G)$ has a (unique) extension in $A^p(\Omega)$. 

If $G$ is not connected however, we can construct counterexamples. For instance, we may have $\Omega$ be the unit ball in $\C^n$ and $L=\{z\in \C^n:1/4\le \|z\|\le 3/4\}$; $G=\{z\in \C^n:\|z\|<1/4\}\cup \{z\in \C^n:3/4<\|z\|<1\}$ and the function that is $0$ on the former set of the union and $1$ in the latter is in $A^p(G)$ but has no extension in $A^p(\Omega)$. In this case, we can prove that this is generically true for any function in $A^p(G)$:

\begin{thm}\label{Dichotomy} Let $\Omega$ be an open subset of $\C^n$, $L$ a compact subset of $\Omega$ and $G=\Omega\setminus L$. If there is a function $f_0\in A^p(G)$ with no extension in $A^p(\Omega)$, then the set of all such functions is a dense $G_{\delta}$ subset of $A^p(G)$.\end{thm}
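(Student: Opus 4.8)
The plan is to exploit the fact that the restriction map $\rho\colon A^p(\Omega)\to A^p(G)$, $F\mapsto F|_G$, is a continuous linear map between Fréchet spaces, and that its image $\rho(A^p(\Omega))$ is precisely the set of functions in $A^p(G)$ that \emph{do} extend. We want to show that if this image is a proper subset, then it is meager, so that its complement is a dense $G_\delta$. First I would observe that $\rho$ is injective: since $L$ is compact and $\Omega$ is open and connected, $\Omega\setminus L$ has a connected component whose closure meets $\partial L$ from the $\Omega$-side, and two holomorphic functions on $\Omega$ agreeing on a nonempty open subset agree everywhere by the identity principle; hence $\rho$ is a continuous linear bijection onto its image $Y=\rho(A^p(\Omega))\subseteq A^p(G)$.

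The key step is to write $Y$ as a countable union of closed sets with empty interior. Exhaust $\Omega$ by an increasing sequence of relatively compact open sets $\Omega_k\Subset\Omega_{k+1}\Subset\Omega$ with $\bigcup_k\Omega_k=\Omega$ and $L\subseteq\Omega_1$, and for integers $j,k$ let
\begin{equation*}
S_{j,k}=\left\{f\in A^p(G):\ \exists\,F\in A^p(\Omega)\text{ with }F|_G=f,\ \sup_{z\in\overline{\Omega_k}}\ \max_{|\alpha|\le\min(p,k)}\left|\frac{\partial^\alpha F}{\partial z^\alpha}(z)\right|\le j\right\}.
\end{equation*}
Then $Y=\bigcup_{j,k}S_{j,k}$. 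Each $S_{j,k}$ is closed: if $f_m\in S_{j,k}$ with witnesses $F_m$ and $f_m\to f$ in $A^p(G)$, then the uniform bounds give, by Montel's theorem applied on an exhaustion of $\Omega$ together with Cauchy estimates for the derivatives, a subsequence $F_{m_i}$ converging locally uniformly (with all derivatives up to order $p$) to some $F\in A^p(\Omega)$ satisfying the same bound on $\overline{\Omega_k}$; since $F|_G=\lim f_{m_i}=f$ on $G$, we get $f\in S_{j,k}$. Here one must be a little careful to upgrade the Montel-type convergence to convergence in every seminorm $|\cdot|_{\alpha,N}$ of $A^p(G)$, but this is routine because on any compact subset of $\overline G$ the boundary values are controlled by interior Cauchy estimates on a slightly larger relatively compact piece of $\Omega$, using that $L$ is compactly contained in $\Omega$.

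Next I would show each $S_{j,k}$ has empty interior, which is where the hypothesis that \emph{some} $f_0\in A^p(G)$ fails to extend gets used. Suppose $S_{j,k}$ contained an open ball around some $f\in A^p(G)$. Note $f$ itself must extend (take $m$ large in $\{f\}\subseteq S_{j,k}$), say $F|_G=f$. For a fixed nonextendable $f_0$ and small $t>0$, the function $f+tf_0$ lies in that ball for $t$ small, hence extends to some $\tilde F\in A^p(\Omega)$; but then $\tilde F-F$ is an extension of $tf_0$ to $\Omega$, so $f_0=t^{-1}(\tilde F-F)|_G$ extends, a contradiction. Therefore $Y=\bigcup_{j,k}S_{j,k}$ is a countable union of closed nowhere dense sets, i.e. meager, so $A^p(G)\setminus Y$ is comeager; since $A^p(G)$ is a Fréchet space, Baire's theorem gives that $A^p(G)\setminus Y$ is a dense $G_\delta$. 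Finally, $A^p(G)\setminus Y$ is exactly the set of functions with no extension in $A^p(\Omega)$, which proves the theorem.

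The main obstacle I anticipate is the closedness of $S_{j,k}$: one needs that a locally uniformly bounded sequence of holomorphic extensions on $\Omega$ has a subsequence converging in the full topology of $A^p(\Omega)$ (all derivatives, continuous up to $\overline\Omega$), and that the limit's restriction genuinely equals the given $A^p(G)$-limit $f$ on $G$ together with the correct boundary values. Montel plus Cauchy estimates handle the interior convergence and the uniform bound on $\overline{\Omega_k}$; the boundary continuity of the limit and its derivatives follows because $\overline G\supseteq\partial\Omega$ and near $\partial\Omega$ the set $G$ coincides with $\Omega$ (as $L\Subset\Omega$), so the $A^p(G)$-convergence already encodes the needed boundary control. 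Everything else is a soft Baire-category argument of exactly the same shape as the proof of Theorem \ref{HomolomorphicExtendabilityAp}.
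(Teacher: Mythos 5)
Your proposal is correct and follows essentially the same route as the paper: write the set of extendable functions as a countable union of closed sets indexed by bounds on the extension, show each piece is nowhere dense via the perturbation $f+tf_0$ (the extension of the perturbed function minus the extension of $f$ would extend $tf_0$, a contradiction), and conclude by Baire's theorem. The only difference is that you bound the extensions on a compact exhaustion $\overline{\Omega_k}$ instead of globally by a constant $M$ as the paper does; this is a harmless variation, and in fact slightly more careful when $\Omega$ is unbounded, since a function in $A^p(\Omega)$ need not be globally bounded and the paper's union $\bigcup_M S_M$ could a priori miss some extendable functions.
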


\begin{proof}Let $0<M<\infty$ and $S=S_M$ be the set of functions $f\in A^p(G)$ for which there is an $F\in A^p(\Omega)$ bounded by $M$ such that $F=f$ on $G$.

$S$ is closed in $A^p(G)$: If $f_m\in S$ have extensions $F_m\in A^p(\Omega)$ and $f_m\to f$ in the topology of $A^p(G)$, then by Montel's Theorem (\cite{Gauthier}), there is a subsequence $F_{k_m}\in A^p(\Omega)$ with $F_{k_m}\to F$ over the compact subsets of $\Omega$, for some $F$ holomorphic on $\Omega$. We have $F=f$ over $G$ and $F$ is bounded by $M$. Finally, $\partial \Omega\subseteq \partial G$, and as $F=f\in A^p(G)$, it follows that $F\in A^p(\Omega)$.

$S$ has empty interior: If on the contrary there is some $f\in S^{\circ}$, then there are $l,N\in \N$, $l\le p$ and $\epsilon>0$ so that whenever some $g\in A^p(G)$ satisfies
\begin{equation}\label{DistanceOfFunctions}\sup_{z\in \overline G,\|z\|\le N}\left|\frac{\partial^{\alpha}f}{\partial z^{\alpha}}(z)-\frac{\partial^{\alpha}g}{\partial z^{\alpha}}(z)\right|<\epsilon\text{ , }\forall \alpha\in \N^n\text{ , }|\alpha|\le l\end{equation}
then $g\in S$. The function $h=\delta f_0+f$ for suitably small $\delta>0$ satisfies this condition, hence $h\in S$. If $F,H\in A^p(\Omega)$ are the extensions of $f,h$ over $\Omega$, then $(H-F)/\delta$ is a $A^p(\Omega)$ extension of $f_0$, contradicting our assumption on $f_0$.

Finally, the set of functions $f\in A^p(G)$ with no extension in $A^p(\Omega)$ is the countable intersection $\cap_{M\in \N}S_M^c$; the sets $S_M^c$ are open and dense hence Baire's Theorem allows us to conclude that the intersection is a dense $G_{\delta}$ of $A^p(G)$.
\end{proof}

We thus have established a dichotomy: Either every function in $A^p(G)$ will have an extension in $A^p(\Omega)$, or the collection of functions in $A^p(G)$ without such an extension will be a dense $G_{\delta}$ in $A^p(\Omega)$. Both possibilities can of course occur.

\section{Real analyticity on the polydisc}\label{Real Analyticity on the Polydisc}

Consider an open polydisc in $\C^n$; for simplicity let us use $D^n$, $D$ being the open unit disc in $\C$. The distinguished boundary of $D^n$ is $\T^n$, $\T$ being the unit circle in $\C$.

\begin{prop}\label{RealAndComplexAnalyticity}Let $(\theta_1,...,\theta_n)\in \R^n$ and $f:\T^n\to \C$. The following are equivalent
\begin{itemize}\item[1.] There is a power series
\begin{equation}I(t_1,...,t_n)=\sum_{k_1,...,k_n\ge 0}a_{k_1,...,k_n}(t_1-\theta_1)^{k_1}\cdots (t_n-\theta_n)^{k_n}\end{equation}
converging absolutely and uniformly over $(t_1,...,t_n)\in \prod_{i=1}^n[\theta_i-\delta,\theta_i+\delta]$ for some $\delta>0$, so that $I(t_1,...,t_n)=f(e^{it_1},...,e^{it_n})$ for $(t_1,...,t_n)\in \prod_{i=1}^n[\theta_i-\delta,\theta_i+\delta]$.
\item[2.] There is a power series
\begin{equation}J(z_1,...,z_n)=\sum_{k_1,...,k_n\ge 0}b_{k_1,...,k_n}(z_1-e^{i\theta_1})^{k_1}\cdots (z_n-e^{i\theta_n})^{k_n}\end{equation}
converging absolutely and uniformly over $(z_1,...,z_n)\in \prod_{i=1}^n\overline{ D(\theta_i,\delta)}$ for some $\delta>0$, so that $J(e^{it_1},...,e^{it_n})=f(e^{it_1},...,e^{it_n})$ for $(t_1,...,t_n)\in \prod_{i=1}^n[\theta_i-\delta,\theta_i+\delta]$.\end{itemize}
\end{prop}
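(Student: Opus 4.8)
The plan is to prove both implications by factoring through a genuine holomorphic function of $n$ complex variables on a complex polydisc, passing between the real coordinates $t_j$ and the complex coordinates $z_j$ via the exponential map $t\mapsto e^{it}$ and a holomorphic branch of its inverse.

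For $(1)\Rightarrow(2)$ I would first note that absolute convergence of the series $I$ at the corner $(\theta_1+\delta,\dots,\theta_n+\delta)$ of the box gives $\sum_{k_1,\dots,k_n\ge 0}|a_{k_1,\dots,k_n}|\,\delta^{k_1+\cdots+k_n}<\infty$, so the same coefficients, now read in complex variables, produce a power series converging absolutely and uniformly on the closed complex polydisc $\prod_i\overline{D(\theta_i,\delta)}$ and defining there a holomorphic function $\tilde I$ with $\tilde I(t)=I(t)$ for real $t$. Next I would fix, for each $j$, the holomorphic branch $g_j(z)=\theta_j-i\log(z\,e^{-i\theta_j})$ (principal logarithm), which is holomorphic near $e^{i\theta_j}$ and satisfies $e^{ig_j(z)}=z$ and $g_j(e^{i\theta_j})=\theta_j$; by continuity at $e^{i\theta_j}$ there is $\rho>0$ with $g_j\big(D(e^{i\theta_j},\rho)\big)\subseteq D(\theta_j,\delta)$ for every $j$. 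Then $J(z_1,\dots,z_n):=\tilde I\big(g_1(z_1),\dots,g_n(z_n)\big)$ is holomorphic on $\prod_i D(e^{i\theta_i},\rho)$, so its Taylor series about $(e^{i\theta_1},\dots,e^{i\theta_n})$ converges absolutely and uniformly on each strictly smaller closed polydisc; and since $g_j(e^{it_j})=t_j$ for $t_j$ near $\theta_j$ (it is the unique preimage of $e^{it_j}$ close to $\theta_j$), we obtain $J(e^{it_1},\dots,e^{it_n})=\tilde I(t)=f(e^{it_1},\dots,e^{it_n})$, which establishes $(2)$.

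For $(2)\Rightarrow(1)$ I would argue symmetrically: absolute convergence of $J$ at the corner of its closed polydisc upgrades $J$ to a holomorphic function on the open polydisc $\prod_i D(e^{i\theta_i},\delta)$. The entire map $\Phi(w_1,\dots,w_n)=(e^{iw_1},\dots,e^{iw_n})$ sends $(\theta_1,\dots,\theta_n)$ to $(e^{i\theta_1},\dots,e^{i\theta_n})$, so by continuity $\Phi\big(\prod_i D(\theta_i,\eta)\big)\subseteq\prod_i D(e^{i\theta_i},\delta)$ for some $\eta>0$; then $J\circ\Phi$ is holomorphic on $\prod_i D(\theta_i,\eta)$ and hence equals a power series $\sum a_{k_1,\dots,k_n}(w_1-\theta_1)^{k_1}\cdots(w_n-\theta_n)^{k_n}$ converging absolutely and uniformly on each smaller closed polydisc. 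Restricting $w_j=t_j$ to real values gives the desired real power series $I$, convergent on a small box $\prod_i[\theta_i-\eta',\theta_i+\eta']$, with $I(t)=J(e^{it_1},\dots,e^{it_n})=f(e^{it_1},\dots,e^{it_n})$.

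The step I expect to be the crux — and would state most carefully — is the passage ``absolute convergence on a real box $\Rightarrow$ convergence of the same series on the complex polydisc of equal polyradius'', since this is exactly what turns a real-analytic expansion into a holomorphic function of several complex variables and is the content of the equivalence. The only other point needing care is the routine bookkeeping that $g_j$ (resp.\ $\Phi$) maps a sufficiently small polydisc into the domain of $\tilde I$ (resp.\ of $J$), which follows from continuity at the base point; the remaining facts — holomorphy of compositions of holomorphic maps, and locally uniform convergence of the Taylor series of a function holomorphic on an open polydisc over compact subpolydiscs — are standard.
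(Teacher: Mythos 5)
Your proposal is correct and follows essentially the same route as the paper: complexify the real power series to a holomorphic function on a complex polydisc, then transport it through the exponential map $(w_1,\dots,w_n)\mapsto(e^{iw_1},\dots,e^{iw_n})$ and its local holomorphic inverse (your explicit logarithm branches $g_j$ play the role of the paper's biholomorphism $\Gamma^{-1}$), and Taylor-expand the composition. The only difference is cosmetic: you justify the complexification step (absolute convergence at the corner of the real box implies uniform absolute convergence on the complex polydisc of the same polyradius) which the paper simply asserts.
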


\begin{proof}Shrinking $\delta$ if necessary, we may use open intervals $(\theta_i-\delta,\theta_i+\delta)$ and discs $D(\theta_i,\delta)$ instead of closed ones.

Let $\gamma:\R^n\to \T^n$, $\gamma(t_1,...,t_n)=(e^{it_1},...,e^{it_n})$. If $A=\prod_{i=1}^n(\theta_i-\delta,\theta_i+\delta)$ for $\delta>0$, then $\gamma:A\to B=\gamma(A)$ is a diffeomorphism for small enough $\delta$. The map $\gamma$ can be extended to a holomorphic map $\Gamma:\C^n\to \C^n$, $\Gamma(z_1,...,z_n)=(e^{iz_1},...,e^{iz_n})$; if we restrict $\Gamma$ to a sufficiently small open neighborhood $U$ of $A\subseteq \C^n$, then $\Gamma:U\to V=\Gamma(U)$ becomes a biholomorphism extending $\gamma:A\to B$.

If $f$ has the property in the first item then $I=f\circ \gamma$ on $A$. The power series $I$ can be extended to a complex power series $I(z_1,...,z_n)$ converging absolutely and uniformly for $z_i\in D(\theta_i,\delta)$. The extended power series $I$ is holomorphic on $\prod_{i=1}^nD(\theta_i,\delta)$ and we may take $U=\prod_{i=1}^nD(\theta_i,\delta)$ by shrinking $\delta$ if needed. Therefore, $I\circ \Gamma^{-1}:V\to \C$ is holomorphic  hence it has an absolutely and uniformly convergent power series $J$ over a polydisc in $V$ centered at $(e^{i\theta_1},...,e^{i\theta_n})$. Over $B$, $J=I\circ \Gamma^{-1}=I\circ \gamma^{-1}=f$.

Conversely, if $f$ has the property in the second item, then $J\circ \Gamma$ is holomorphic, with a power series $I$ converging absolutely and uniformly to it over a polydisc in $U$ centered at $(t_1,...,t_n)$. Then over $A$, $I=J\circ \Gamma=f\circ \gamma$ as desired.
\end{proof}

A function $f:\T^n\to \C$ is real analytic at $(e^{i\theta_1},...,e^{i\theta_n})$ if it satisfies one (hence both) of the conditions in Proposition \ref{RealAndComplexAnalyticity}.

A function $f:\T^n\to \C$ is holomorphically extendable at $(e^{i\theta_1},...,e^{i\theta_n})$ if there is an open neighborhood $U$ of this point and a holomorphic function $F:U\to \C$ agreeing with $f$ over $U\cap \T^n$. By Proposition \ref{RealAndComplexAnalyticity}, we derive the following: 

\begin{prop}\label{RealAnalyticIsHolomorphicallyExtendable}A function $f:\T^n\to \C$ is holomorphically extendable at a point of $\T^n$ if and only if it is real analytic at that point.\end{prop}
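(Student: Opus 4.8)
The plan is to read this off directly from Proposition~\ref{RealAndComplexAnalyticity}, which already identifies real analyticity of $f$ at $p:=(e^{i\theta_1},\dots,e^{i\theta_n})$ with its item~2: the existence of a power series $J$ in $z_1-e^{i\theta_1},\dots,z_n-e^{i\theta_n}$ converging absolutely and uniformly on a polydisc centered at $p$ and agreeing with $f(e^{it_1},\dots,e^{it_n})$ for $t_i$ near $\theta_i$. So it suffices to match item~2 with holomorphic extendability at $p$. For the direction ``real analytic $\Rightarrow$ holomorphically extendable'', I would simply take the power series $J$ from item~2, convergent on a polydisc $P$ centered at $p$; then $F:=J$ is holomorphic on the open set $U:=P$ and $F=f$ on $U\cap\T^n$, which is exactly the definition of holomorphic extendability at $p$.

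For the converse, suppose $U\subseteq\C^n$ is open with $p\in U$ and $F:U\to\C$ is holomorphic with $F=f$ on $U\cap\T^n$. I would pick an open polydisc $P=\prod_{i=1}^n D(e^{i\theta_i},\rho)$ with $\overline P\subseteq U$. On $P$ the function $F$ equals its Taylor expansion $J$ about $p$, and by the standard fact (recalled just before Theorem~\ref{WeakTheoremNestorDalas}) that a function holomorphic on an open polydisc is the absolutely-and-uniformly-on-compacta limit of its Taylor series there, $J$ converges absolutely and uniformly on $\prod_{i=1}^n\overline{D(e^{i\theta_i},\delta)}$ for every $\delta<\rho$. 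Using $|e^{it}-e^{i\theta}|\le|t-\theta|$, choosing $\delta<\rho$ guarantees that $(e^{it_1},\dots,e^{it_n})\in P$ whenever $t_i\in[\theta_i-\delta,\theta_i+\delta]$, and there $J(e^{it_1},\dots,e^{it_n})=F(e^{it_1},\dots,e^{it_n})=f(e^{it_1},\dots,e^{it_n})$. Hence item~2 of Proposition~\ref{RealAndComplexAnalyticity} is satisfied and $f$ is real analytic at $p$.

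I do not expect a genuine obstacle here: all the analytic substance — passing between the real-variable power series and the complex one, and between $\T^n$ and a polydisc in $\C^n$ — has already been packed into Proposition~\ref{RealAndComplexAnalyticity}, and what remains is bookkeeping. The one point deserving a sentence is the radius management: a single $\delta$ must serve all $n$ coordinates, and the torus polyarc $\{(e^{it_1},\dots,e^{it_n}):|t_i-\theta_i|\le\delta\}$ must lie inside the polydisc on which the Taylor series converges; both are dispatched by shrinking $\delta$ and the elementary bound $|e^{it}-e^{i\theta}|\le|t-\theta|$.
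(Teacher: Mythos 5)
Your proposal is correct and follows exactly the route the paper takes, namely reading the equivalence off item~2 of Proposition~\ref{RealAndComplexAnalyticity}; the paper simply states ``By Proposition~\ref{RealAndComplexAnalyticity}, we derive the following'' and leaves the bookkeeping you carried out to the reader. The only micro-point worth a word is that in the forward direction the containment goes the other way (you need $|e^{it}-e^{i\theta}|<\delta'$ to force $|t-\theta|\le\delta$, e.g.\ via $|e^{it}-e^{i\theta}|\ge\tfrac{2}{\pi}|t-\theta|$ for $|t-\theta|\le\pi$, rather than $|e^{it}-e^{i\theta}|\le|t-\theta|$), but this is dispatched by the same shrinking of the polydisc radius you already invoke.
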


We have already defined a different notion of holomorphic extendability for functions $f\in A^p(D^n)$ at points of the boundary, in section 2. But we can show that the two definitions are equivalent:

\begin{prop}\label{ExtendabilityEquivalentDefinitions}Let $p\in \{0,1,...\}\cup \{+\infty\}$, $f\in A^p(D^n)$ and $\zeta\in \T^n$. The function $f$ is holomorphically extendable at $\zeta$ if and only if $f|_{\T^n}$ is holomorphically extendable at $\zeta$.\end{prop}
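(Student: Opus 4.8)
The statement says that, for $f\in A^p(D^n)$, the notion of extendability at $\zeta$ from section~\ref{Extendability in domains} (agreement with $f$ on a component of $U\cap D^n$) coincides with the notion from section~\ref{Real Analyticity on the Polydisc} (agreement of some holomorphic $F$ with $f|_{\T^n}$ on $U\cap\T^n$). I would prove the two implications in turn.

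Suppose first that $f$ is extendable at $\zeta$ in the domain sense: $F$ is holomorphic on an open $U\ni\zeta$ and $F=f$ on a connected component $V$ of $U\cap D^n$ with $\zeta\in\overline V$ (the adjacency $\zeta\in\overline V$ being part of what ``at $\zeta$'' means). Shrinking, I may take $U=B(\zeta,r)$; as $B(\zeta,r)$ and $D^n$ are convex, $B(\zeta,r)\cap D^n$ is connected, and for small $r$ it meets $V$, so it lies in $V$ and $F=f$ on all of it. Each $w\in\T^n\cap B(\zeta,r)$ lies in $\overline{B(\zeta,r)\cap D^n}$ (approach $w$ by $\lambda w$, $\lambda\uparrow1$), so continuity of $F$ and of the boundary values of $f\in A^p(D^n)\subseteq A(D^n)$ gives $F(w)=\lim F=\lim f=f|_{\T^n}(w)$. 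Thus $F=f|_{\T^n}$ on $\T^n\cap B(\zeta,r)$, which is the desired conclusion.

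Conversely, suppose $F$ is holomorphic on an open $U\ni\zeta=(\zeta_1,\dots,\zeta_n)$ with $F=f|_{\T^n}$ on $U\cap\T^n$. Pick $\delta>0$ with $\prod_i\overline{D(\zeta_i,\delta)}\subseteq U$ and set $Q=\prod_iD(\zeta_i,\delta)$. I claim $F=f$ on $Q\cap D^n=\prod_i\bigl(D(\zeta_i,\delta)\cap D\bigr)$; since $Q$ is open with $\zeta\in Q$, and $Q\cap D^n$ is connected (a product of convex sets) with $\zeta$ in its boundary, the claim exhibits $F$ as an extension of $f$ at $\zeta$ in the domain sense. Let $g=F-f$, holomorphic on $Q\cap D^n$, continuous up to its closure, and equal to $0$ on the ``distinguished corner'' $\prod_i\bigl(D(\zeta_i,\delta)\cap\T\bigr)\subseteq U\cap\T^n$. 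The plan is to upgrade this corner-vanishing to $g\equiv0$ on $Q\cap D^n$ by liberating one coordinate at a time, using two facts: (i) when some coordinates are frozen at points of $\T$, $g$ is still holomorphic in the remaining coordinates on the corresponding polydisc and continuous up to the closure --- clear for the summand $F$, and for $f$ because $f(\dots,\omega_i,\dots)=\lim_{r\uparrow1}f(\dots,r\omega_i,\dots)$ locally uniformly, by uniform continuity of $f$ on $\overline{D^n}$; and (ii) a function holomorphic on the lune $D(c,\delta)\cap D$ ($c\in\T$), continuous up to the boundary, and vanishing on the arc $D(c,\delta)\cap\T$ is identically zero --- by Schwarz reflection across that arc. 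Then one inducts on $k$: the claim at stage $k$ is that $g$ vanishes whenever $z_i\in\overline{D(\zeta_i,\delta)\cap D}$ for $i\le k$ and $z_i\in D(\zeta_i,\delta)\cap\T$ for $i>k$; stage $0$ is the corner-vanishing, and to pass from $k$ to $k+1$ one freezes $z_i$ in the open lune for $i\le k$ and on the arc for $i>k+1$, notes by (i) that $w\mapsto g(z_1,\dots,z_k,w,z_{k+2},\dots,z_n)$ is holomorphic on $D(\zeta_{k+1},\delta)\cap D$, continuous up to the closure, and vanishes on the arc $D(\zeta_{k+1},\delta)\cap\T$ by stage $k$, hence vanishes identically by (ii); letting the frozen data vary and passing to closures yields stage $k+1$. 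Stage $n$ says $g\equiv0$ on $Q\cap D^n$.

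The point with real content is (i): the hypothesis gives vanishing of $g$ only on $U\cap\T^n$, an $n$-real-dimensional slice of the $(2n-1)$-dimensional $\partial D^n$, so no dimension count helps; it is the product structure of $D^n$ together with the preservation of holomorphy under freezing boundary coordinates on $\T$ that lets the elementary one-variable reflection finish the job. Everything else --- convexity and hence connectedness of the local pieces, $\zeta$ lying in the relevant closures, and the bookkeeping of the induction --- is routine. (Alternatively, for the converse one may use Propositions~\ref{RealAndComplexAnalyticity} and~\ref{RealAnalyticIsHolomorphicallyExtendable} to replace $F$ by a power series $J$ about $\zeta$ converging to $f$ on $\T^n$ near $\zeta$, and run the identical reflection argument to get $J=f$ on $Q\cap D^n$.)
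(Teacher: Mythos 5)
Your proof is correct, and the substantive (converse) direction follows a genuinely different route from the paper's. The paper glues $f$ and $F$ into a single function $G$ on a polydisc neighborhood $U$ of $\zeta$ and proves $G$ is holomorphic by induction on $n$: the base case is Morera's theorem across the arc, and the inductive step invokes Hartogs' theorem on separate holomorphy, with the only delicate point being continuity of the one-variable slices across $\T$ --- which is exactly the agreement $f=F$ on an $(n-1)$-dimensional slice, supplied by the induction hypothesis. You instead forgo the gluing and prove directly that $F=f$ on the corner $\prod_i\bigl(D(\zeta_i,\delta)\cap D\bigr)$, liberating one coordinate at a time from the arc to the lune via one-variable Schwarz reflection and the identity theorem; the one genuinely multi-variable input is your observation (i) that freezing some coordinates of an $A(D^n)$-function on $\T$ leaves a function holomorphic in the remaining variables (by locally uniform radial limits), which is what makes the slice-by-slice argument legitimate. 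The trade-off: the paper's argument proves the stronger statement that $f$ and $F$ fit together into a single holomorphic function on all of $U$, at the price of citing Hartogs' separate-holomorphy theorem; yours uses only one-variable tools and delivers exactly the agreement needed for the equivalence, which suffices since $F$ restricted to your polydisc $Q$ already witnesses extendability in the domain sense ($Q\cap D^n$ being convex, hence its own unique component). Your handling of the easy direction is also fine, and your explicit assumption that the component $V$ is adjacent to $\zeta$ is the intended reading of the paper's definition.
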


\begin{proof}Clearly, if $f\in A^p(D^n)$ is holomorphically extendable at $\zeta$ then $f|_{\T^n}$ is as well. The converse is less obvious. Assuming that $f|_{\T^n}$ is holomorphically extendable at $\zeta$, we obtain a holomorphic function $F$ defined on a small open polydisc $U=D_1\times...\times D_n$ centered at $\zeta$, so that $F=f$ over $\T^n\cap U$. It suffices to prove that the function $G:U\to \C$ defined as $G=f$ on $D^n\cap U$ and $G=F$ on $U\setminus D^n$, is holomorphic. We shall prove this by inducting on $n$.

If $n=1$, $G$ is continuous on $D\cup U$ and holomorphic everywhere except for the arc $\T\cap U$; it follows by elementary complex analysis (e.g. by Morrera's Theorem) that $F$ is holomorphic on $D\cup U$. 

Now let $n\ge 2$ and assume the result holds for $n-1$. By Hartog's Theorem, we only need to show that $G$ is separately holomorphic in each variable. We shall do it for the first variable, as the case of the other variables is entirely similar. So fix $(w_1,...,w_n)\in U$ and consider the function $g:D_1\to \C$, $g(z)=G(z,w_2,...,w_n)$.

If all $w_i$ have modulus $|w_i|<1$, then $g(z)=f(z,w_2,...,w_n)$ is holomorphic near $w_1$ by definition. So we may assume that $|w_j|\ge 1$ for some $j$. If we can pick $j\neq 1$ then $g(z)=F(z,w_2,...,w_n)$ is again holomorphic near $w_1$. So let us assume that $|w_i|<1$ for all $i\neq 1$, but $|w_1|\ge 1$. If $|w_1|>1$ then near $w_1$, $g(z)=F(z,w_2,...,w_n)$ so this case is also dealt with.

 It remains to consider the event where $|w_1|=1$ and $|w_i|<1$ for all $i\neq 1$. Then $g:D_1\to \C$ is equal to $f(z,w_2,...,w_n)$ for $|z|<1$ and to $F(z,w_2,...,w_n)$ for $|z|\ge 1$. Thus $g$ is holomorphic on $D_1\setminus \T$, and it remains to show that it is continuous on $D_1$, as this would imply holomorphy over $D_1$ by the one dimensional case. Continuity of $g$ reduces to $f(z,w_2,...,w_n)=F(z,w_2,...,w_n)$ for all $|z|=1$, $z\in D_1$. To prove this, fix such $z$ and consider the functions $\kappa:D^{n-1}\to \C$, $\kappa(z_2,...,z_n)=f(z,z_2,...,z_n)$ and $\lambda:U'\to \C$, $\lambda(z_2,...,z_n)=F(z,z_2,...,z_n)$ where $U'=D_2\times...\times D_n$. These are two holomorphic functions that agree on $T^{n-1}\cap U'$, hence by the induction hypothesis, we have that $\kappa=\lambda$ on their common domain of definition $D^{n-1}\cap U'$. We conclude that $f(z,w_2,...,w_n)=F(z,w_2,...,w_n)$ for all $z\in \T\cap D_1$, as desired. This proves that $g$ is holomorphic near $w_1$, hence that $G$ is holomorphic with respect to its first variable on $U$ (as $(w_1,...w_n)\in U$ was chosen arbitrarily). The proof is complete.
\end{proof}

Combining Proposition \ref{ExtendabilityEquivalentDefinitions} with Theorem \ref{HomolomorphicExtendabilityAp} and Proposition \ref{RealAnalyticIsHolomorphicallyExtendable}, we obtain: 

\begin{thm}\label{RealAnalyticityinApDn}For $p\in \{0,1,...\}\cup \{+\infty\}$, the collection of functions in $A^p(D^n)$ that are not real analytic at a fixed point (or at all points) of $\T^n$ is a dense $G_{\delta}$ of $A^p(D^n)$.\end{thm}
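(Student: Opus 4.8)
The plan is to observe that all the analytic work has already been done, so the statement follows by chaining the earlier results together with one application of Baire's theorem. First I would record the two elementary facts needed: $D^n$ is open and convex, being a product of discs, hence satisfies the star condition by the Lemma above, so Theorem \ref{HomolomorphicExtendabilityAp} applies with $\Omega=D^n$; and $\T^n\subseteq \partial D^n$, since every point of $\T^n$ has all its coordinates of modulus one and so lies in $\overline{D^n}$ but not in $D^n$.

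Next, I would fix $\zeta\in \T^n$. By item 1 of Theorem \ref{HomolomorphicExtendabilityAp}, the set $E_\zeta$ of $f\in A^p(D^n)$ that are not holomorphically extendable at $\zeta$, in the sense of section \ref{Extendability in domains}, is a dense $G_\delta$ subset of $A^p(D^n)$. Now Proposition \ref{ExtendabilityEquivalentDefinitions} says that $f$ is holomorphically extendable at $\zeta$ if and only if $f|_{\T^n}$ is, and Proposition \ref{RealAnalyticIsHolomorphicallyExtendable} says that $f|_{\T^n}$ is holomorphically extendable at $\zeta$ if and only if it is real analytic at $\zeta$. Hence $E_\zeta$ coincides with the set of $f\in A^p(D^n)$ whose restriction to $\T^n$ fails to be real analytic at $\zeta$, which gives the ``fixed point'' assertion.

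For the ``all points'' version I would take a countable dense subset $\{\zeta_k\}_{k\in\N}$ of $\T^n$ and put $E=\bigcap_{k}E_{\zeta_k}$. Since $A^p(D^n)$ is a Fr\'echet space, hence a complete metric space, Baire's theorem shows that $E$ is again a dense $G_\delta$. It then remains to identify $E$ with the collection of $f$ such that $f|_{\T^n}$ is real analytic at no point of $\T^n$: one inclusion is immediate, and the other uses that real analyticity of $f|_{\T^n}$ is an open condition on the point, so that if $f|_{\T^n}$ were real analytic somewhere it would be real analytic on a nonempty relatively open subset of $\T^n$, hence at some $\zeta_k$, whence $f\notin E$. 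I do not anticipate any genuine obstacle here: everything substantive is contained in Theorem \ref{HomolomorphicExtendabilityAp} and Propositions \ref{ExtendabilityEquivalentDefinitions} and \ref{RealAnalyticIsHolomorphicallyExtendable}, and what is left is the Baire intersection together with this standard openness remark.
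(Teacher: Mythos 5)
Your argument is exactly the paper's: Theorem \ref{RealAnalyticityinApDn} is obtained there precisely by combining Theorem \ref{HomolomorphicExtendabilityAp} (applicable since $D^n$ is convex) with Propositions \ref{ExtendabilityEquivalentDefinitions} and \ref{RealAnalyticIsHolomorphicallyExtendable}, and your ``all points'' step via a countable dense set of $\zeta_k$ and Baire is the same device the paper uses, with your openness remark correctly justifying the identification of the intersection. No gaps.
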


\section{Extendability of functions defined on closed sets}\label{Extendability of functions defined on closed sets}

Let $L$ be a closed subset of $\C^n$. The space $C(L)$ of continuous functions $L\to \C$ is a Fr\'echet space with seminorms $|f|_N=\sup_{z\in L, \|z\|\le N}|f(z)|$, $N\in \N$. If $L$ is compact, then $C(L)$ is a Banach space.

 A function $f\in C(L)$ is holomorphically extendable at a point of $L$ if there is a neighborhood $U$ of this point and a holomorphic function $F:U\to \C$ such that $F=f$ on $U\cap L$.

\begin{thm}\label{TheoremWithClosedSet}Let $L$ be a closed subset of $\C^n$ with the following two properties
\begin{itemize}
\item For every $\zeta\in L$ there is an $r_0>0$ such that either $B(\zeta,r_0)\subseteq L$ or for all $r\in (0,r_0)$ there is a point $w\in B(\zeta,r)\setminus L$ and an $n-1$ dimensional complex hyperplane $H$ containing $w$ and disjoint from $B(\zeta,r)\cap L$.
\item For any product $U=U_1\times...\times U_n$ of domains $U_i\subseteq \C$ and for any point $ (z_1,...,z_n)\in U\cap L$, the sets $\{w\in U_i: (z_1,...,z_{i-1},w,z_i,...,z_n)\in L\}$ have a limit point in $U_i$ for all $i=1,2,...,n$. 
\end{itemize}
Then the set of functions in $C(L)$ that are not holomorphically extendable at a given point of $L$ (or at all points of $L$) is a dense $G_{\delta}$ of $C(L)$. \\
In particular, in dimension $n=1$, a sufficient condition is that $L$ be perfect (closed with no isolated points).\end{thm}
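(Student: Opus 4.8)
The plan is to run a Baire-category argument in the complete metric space $C(L)$, parallel to the proofs of Theorems~\ref{HomolomorphicExtendabilityAp} and~\ref{Dichotomy}. First I would reduce to a single fixed point $\zeta\in L$: the set of points of $L$ at which a given $f$ is holomorphically extendable is open in $L$ (an extension near $\zeta$ extends $f$ near every nearby point of $L$), so a function that is not extendable at each point of a countable dense subset of $L$ is not extendable anywhere, and a countable intersection of dense $G_\delta$ sets is again a dense $G_\delta$. So fix $\zeta\in L$. For $M,r>0$ let $S_{M,r}$ denote the set of $f\in C(L)$ for which there is a holomorphic $F$ on $B(\zeta,r)$ with $|F|\le M$ on $B(\zeta,r)$ and $F=f$ on $B(\zeta,r)\cap L$. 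Then $\{f\in C(L):\ f\text{ is extendable at }\zeta\}=\bigcup_{m,k\in\N}S_{m,1/k}$, so it is enough to show each $S_{M,r}$ is closed and nowhere dense in $C(L)$. Closedness follows from Montel's Theorem exactly as in Theorem~\ref{HomolomorphicExtendabilityAp}: a subsequence of the witnessing extensions converges locally uniformly on $B(\zeta,r)$ to some $F$ bounded by $M$, and since $f_m\to f$ uniformly on the bounded set $\overline{B(\zeta,r)}\cap L$, the limit still satisfies $F=f$ on $B(\zeta,r)\cap L$.

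The essential new ingredient, and precisely what the second hypothesis on $L$ is designed to provide, is the following uniqueness lemma, which I would prove by induction on $n$: \emph{if $L$ satisfies the second bullet, $P=P_1\times\cdots\times P_n$ is a polydisc with $P\cap L\neq\emptyset$, and $\phi$ is holomorphic on $P$ with $\phi|_{P\cap L}=0$, then $\phi\equiv0$ on $P$.} For $n=1$ the hypothesis says exactly that every point of $L$ is a limit point of $L$, so $P\cap L$ has a limit point in $P$ and the one-variable identity theorem applies. For the inductive step, pick $z^0=(z_1^0,\dots,z_n^0)\in P\cap L$; one checks directly from the definition that for each $w_1$ with $(w_1,z_2^0,\dots,z_n^0)\in L$ the slice $L'_{w_1}=\{(w_2,\dots,w_n):(w_1,\dots,w_n)\in L\}$ again satisfies the second bullet in $\C^{n-1}$. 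By the second bullet applied to $L$ at $z^0$ in the first coordinate, the set $S=\{w_1\in P_1:(w_1,z_2^0,\dots,z_n^0)\in L\}$ has a limit point in $P_1$; for $w_1\in S$ the function $(w_2,\dots,w_n)\mapsto\phi(w_1,w_2,\dots,w_n)$ is holomorphic on $P_2\times\cdots\times P_n$ and vanishes on $(P_2\times\cdots\times P_n)\cap L'_{w_1}\ni(z_2^0,\dots,z_n^0)$, hence vanishes identically there by the inductive hypothesis. Thus $\phi(w_1,\cdot)\equiv0$ for every $w_1\in S$; fixing $(w_2,\dots,w_n)$ and applying the one-variable identity theorem in $w_1$ over $S$ yields $\phi\equiv0$ on $P$.

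To show $S_{M,r}$ is nowhere dense I would argue by contradiction, assuming $f\in S_{M,r}^{\circ}$; then there are $N\in\N$ and $\epsilon>0$ with $\{g:|g-f|_N<\epsilon\}\subseteq S_{M,r}$, and a witness $F$ for $f\in S_{M,r}$. Apply the first hypothesis at $\zeta$ to obtain $r_0>0$. In the degenerate case $B(\zeta,r_0)\subseteq L$, take $g=f+\delta\,\beta\cdot\overline{z_1-\zeta_1}$, where $\beta$ is a continuous bump equal to $1$ on a ball about $\zeta$ and supported in $B(\zeta,s)$ with $s<\min(r_0,r)$; for small $\delta>0$ we have $g\in S_{M,r}$, yet its witnessing extension $G$ would satisfy $G-F=\delta\,\overline{z_1-\zeta_1}$ near $\zeta$, impossible as the right-hand side is not holomorphic. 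In the main case, choose $\rho_0<\min(r_0,r)$ and use the first hypothesis to get $w\in B(\zeta,\rho_0)\setminus L$ and an affine $\ell$ with $H=\{\ell=0\}\ni w$ and $H\cap B(\zeta,\rho_0)\cap L=\emptyset$; then $H$ is disjoint from the compact set $\overline{B(\zeta,\rho')}\cap L$ for every $\rho'<\rho_0$, so $1/\ell$ is bounded there. Fixing $\rho'\in(\|w-\zeta\|,\rho_0)$, a continuous bump $\psi$ supported in $B(\zeta,\rho')$ and equal to $1$ on a smaller ball $B(\zeta,\rho'')\ni\zeta$, and $h=\delta/\ell$, the function $g=f+h\psi$ lies in $C(L)$ and, for $\delta>0$ small, within $\epsilon$ of $f$ in $|\,\cdot\,|_N$; hence $g\in S_{M,r}$, with some witnessing extension $G$. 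On $B(\zeta,r)\cap L$ we have $G-F=h\psi$, so on a small polydisc $P_0\subseteq B(\zeta,\rho'')$ around $\zeta$ the holomorphic function $\ell\cdot(G-F)-\delta$ vanishes on $P_0\cap L\ni\zeta$; by the Lemma it vanishes on $P_0$, and by the identity theorem on the connected open set $B(\zeta,r)$ it vanishes there, giving $\ell(w)\,(G-F)(w)=\delta$. Since $\ell(w)=0$, this forces $\delta=0$, the desired contradiction.

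The step I expect to be the main obstacle is this last propagation: the extension only ``sees'' $f$ on the possibly very thin set $L$, and one must carry the relation $G-F=\delta/\ell$ from $L$, across the hyperplane $H$, all the way to the pole $w$; the second hypothesis---via the uniqueness lemma, which makes $P_0\cap L$ a set of uniqueness for holomorphic functions---is exactly what permits this, and it is why the second bullet is imposed. Once every $S_{M,r}$ is closed and nowhere dense, Baire's Theorem gives the conclusion at $\zeta$, and then at all points, as explained above. Finally, for $n=1$ the first hypothesis is automatic (for an interior point of $L$ take $B(\zeta,r_0)\subseteq L$, and for any other point take the zero-dimensional ``hyperplane'' $H=\{w\}$ with $w\in B(\zeta,r)\setminus L$), while the second is exactly the statement that $L$ has no isolated points, i.e.\ that $L$ is perfect.
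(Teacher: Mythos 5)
Your proposal is correct and follows essentially the same route as the paper: the same sets $S_{M,r}$, closedness via Montel, the same two-case perturbation ($\bar z_1$ in the degenerate case, $\delta/\ell$ otherwise), the same uniqueness lemma extracted from the second bullet (the paper's Lemma \ref{AnalyticContinuationVersion}, which you prove for general $n$ by induction where the paper writes out $n=2$), and Baire's Theorem. The only cosmetic differences are that you use a bump function where the paper invokes Tietze's Extension Theorem to put $h$ into $C(L)$, and you close the contradiction by showing $\ell\cdot(G-F)-\delta\equiv 0$ on $B(\zeta,r)$ and evaluating at $w$, where the paper instead propagates $G-F=h$ across $B(\zeta,r')\setminus H$ and contradicts boundedness near the pole.
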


\begin{proof}It suffices to prove that non-extendability at a fixed point $\zeta\in L$ is a generic phenomenon (we can then use a dense sequence of points $z_k\in L$ in conjunction with Baire's Theorem to prove the analogous result for non-extendability at all points of $L$). 

Let $M,r\in (0,+\infty)$ and $S=S_{M,r}$ consist of all $f\in C(L)$ for which there is a holomorphic $F$ on $B(\zeta,r)$ bounded by $M$ and agreeing with $f$ over $B(\zeta,r)\cap L$. We will show that $S$ is closed with empty interior in $C(L)$.

To show that $S$ is closed, let $f_m\in S$ converge to $f$ uniformly in the compact subsets of $L$. There are holomorphic $F_m$ on $B(\zeta,r)$ agreeing with $f_m$ on $B(\zeta,r)\cap L$ and bounded by $M$. By Montel's Theorem, there is a subsequence $F_{k_m}$ and a holomorphic function $F$ on $B(\zeta,r)$ such that $F_{k_m}\to F$ uniformly over all compact subsets of $B(\zeta,r)$. Since $F_{k_m}=f_{k_m}$ over $B(\zeta,r)\cap L$, it follows that $F=f$ over $B(\zeta,r)\cap L$, hence $f\in S$ ($F$ is bounded by $M$ as it is the point-wise limit of functions $F_{k_m}$ bounded by $M$).

Now assume $S$ has nonempty interior, and let $f$ be an interior point. Then there is some $\epsilon>0$ and $N\in \N$, so that if $g\in C(L)$ satisfies
\begin{equation}\label{DistanceOfFunctionsUni}\sup_{z\in L, \|z\|\le N}|f(z)-g(z)|<\epsilon\end{equation}
then $g\in S$. We distinguish two cases, depending on whether or not $B(\zeta,r)\subseteq L$. If $B(\zeta,r)\subseteq L$ then the function $g(z_1,...,z_n)=f(z_1,...,z_n)+\delta \bar z_1$ is in $C(L)$ and satisfies equation \eqref{DistanceOfFunctionsUni} for small enough $\delta>0$, hence it must have a holomorphic extension over $B(\zeta,r)$, contradicting that $(z_1,...,z_n)\mapsto \bar z_1$ is not holomorphic.

If $B(\zeta,r)$ is not a subset of $L$, then after shrinking $r$ we may pick by the first condition on $L$ some $w\in B(\zeta,r)\setminus L$ and an $n-1$ dimensional complex hyperplane $H$ disjoint from $L\cap B(\zeta,r)$ and containing $w$. If $(v_1,...,v_n)\in \C^n$ is a non-zero vector perpendicular to $H$, the function
\begin{equation}\label{QuotientDefined}h(z)=\frac{\delta}{(z_1-w_1)v_1+\cdots+(z_n-w_n)v_n}\end{equation}
is holomorphic on $H^c$, for any $\delta>0$.

 Pick $r'$ with $\|\zeta-w\|<r'<r$; then $h|_{\overline{B(\zeta,r')}\cap L}$ is continuous and by Tietze's Extension Theorem, it has a continuous extension $\tilde h$ on $L$ with $\|\tilde h\|_{\infty,L}\le \|h\|_{\infty, \overline{B(\zeta,r')}\cap L}$. It is then easy to see that the function defined as $h$ on $\overline{B(\zeta,r')}\setminus H$ and $\tilde h$ on $L$ is continuous on $[\overline{B(\zeta,r')}\setminus H]\cup L$, since $[\overline{B(\zeta,r')}\setminus H]\cap L=\overline{B(\zeta,r')}\cap L$ and $\tilde h=h$ over this set. We denote this new extension by $h$; then $h\in C(L)$, $h$ is given by \eqref{QuotientDefined} over $B(\zeta,r')\setminus H$ so it is holomorphic on that set and finally, $\|h\|_{\infty,L}=\|h\|_{\infty,\overline{B(\zeta,r')}\cap L}$. \\
If $g=f+h$ then $g\in C(L)$ and for small enough $\delta$, $\eqref{DistanceOfFunctionsUni}$ is satisfied since \eqref{QuotientDefined} can be used to bound $\|h\|_{\infty,\overline{B(\zeta,r')}\cap L}=\|h\|_{\infty,L}$. We conclude that $g\in S$. If $F,G$ are holomorphic on $B(\zeta,r')$ agreeing with $f,g$ over $B(\zeta,r')\cap L$ respectively, then $G-F=h$ over $B(\zeta,r')\cap L$. Take an open polydisc $U=D_1\times...\times D_n$ centered at $\zeta$ and contained in $B(\zeta,r')\setminus H$. By the second condition on $L$ and Lemma \ref{AnalyticContinuationVersion} below, $G-F=h$ over $U\cap L$ implies that $G-F=h$ on $U$ and thus on $B(\zeta,r')\setminus H$, as the latter set is connected. But because $G,F$ are bounded by $M$, this implies that $h$ is bounded over $B(\zeta,r')\setminus H$, contradicting that $h(z)$ is unbounded as $z\to w$, $z\notin H$.

So far, we have proven that $S_{M,r}$ is closed with empty interior in $C(L)$. The collection of non-extendable functions at $\zeta$ in $C(L)$ can be written as $\cap_{0<M,r<+\infty}S^c_{M,r}$, and after enumerating $M\in \N, r=1/k$, $k\in \N$, this is a countable intersection of dense open sets. Thus by Baire's Theorem, the desired collection of functions is a dense $G_{\delta}$ of $C(L)$. The proof will be complete once we prove the following Lemma \ref{AnalyticContinuationVersion}.\end{proof}

\begin{lem}\label{AnalyticContinuationVersion}Let $U_i$ be domains in $\C$ and $L\subseteq U=U_1\times...\times U_n$ be so that for every $(z_1,...,z_n)\in L$, the sets $\{w\in U_i: (z_1,...,z_{i-1},w,z_i,...,z_n)\in L\}$ have a limit point in $U_i$ for all $i$. If $f$ is holomorphic on $U$ and identically zero on $L$ then it is identically zero on $U$.\end{lem}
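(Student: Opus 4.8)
The plan is to prove the result by induction on $n$, peeling off one variable at a time and using the one-dimensional identity theorem (a holomorphic function on a domain in $\C$ vanishing on a set with a limit point in that domain is identically zero).

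For $n=1$ the statement is exactly the classical identity theorem: $L$ is a subset of the domain $U_1$ with a limit point in $U_1$ on which $f$ vanishes, so $f\equiv 0$ on $U_1$. For the inductive step, assume the lemma holds in dimension $n-1$ and let $f$ be holomorphic on $U=U_1\times\cdots\times U_n$, vanishing on $L$. First I would fix an arbitrary point $(z_1,\dots,z_n)\in L$. By the hypothesis applied with $i=1$, the ``slice set'' $A=\{w\in U_1:(w,z_2,\dots,z_n)\in L\}$ has a limit point in $U_1$; since the one-variable function $w\mapsto f(w,z_2,\dots,z_n)$ is holomorphic on $U_1$ and vanishes on $A$, it vanishes on all of $U_1$. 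Thus $f(w,z_2,\dots,z_n)=0$ for every $w\in U_1$ and every $(z_2,\dots,z_n)$ such that $(z_1,z_2,\dots,z_n)\in L$ for some $z_1$; equivalently, writing $\pi$ for the projection onto the last $n-1$ coordinates and $L'=\pi(L)\subseteq U_2\times\cdots\times U_n$, we have shown that $f(w,z')=0$ for all $w\in U_1$ and all $z'\in L'$.

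Now I would fix $w_0\in U_1$ and consider $g_{w_0}(z_2,\dots,z_n)=f(w_0,z_2,\dots,z_n)$, which is holomorphic on $U_2\times\cdots\times U_n$ and, by the previous paragraph, vanishes on $L'$. To invoke the induction hypothesis I must check that $L'$ inherits the slice-limit-point property in $U_2\times\cdots\times U_n$: given $(z_2,\dots,z_n)\in L'$, pick $z_1$ with $(z_1,z_2,\dots,z_n)\in L$; then for each $i\in\{2,\dots,n\}$ the set $\{w\in U_i:(z_1,\dots,z_{i-1},w,z_{i+1},\dots,z_n)\in L\}$ has a limit point in $U_i$ by hypothesis, and this set is contained in $\{w\in U_i:(z_2,\dots,z_{i-1},w,z_{i+1},\dots,z_n)\in L'\}$ (just drop the first coordinate), so the latter also has a limit point in $U_i$. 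Hence $L'$ satisfies the hypotheses of the lemma in dimension $n-1$, and the induction hypothesis gives $g_{w_0}\equiv 0$ on $U_2\times\cdots\times U_n$. Since $w_0\in U_1$ was arbitrary, $f\equiv 0$ on $U$, completing the induction.

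The only genuinely delicate point is the bookkeeping in the induction step — making sure that the projected set $L'$ still satisfies the slice-limit-point condition and that the indices shift correctly when one coordinate is removed; everything else is an immediate application of the classical one-variable identity theorem. I expect no analytic obstacle, only the need to state the reindexing cleanly.
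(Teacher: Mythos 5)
Your proof is correct and takes essentially the same route as the paper's: apply the one-variable identity theorem in the first coordinate to spread the vanishing to all of $U_1$, then repeat in the remaining coordinates. The paper writes out only the $n=2$ case and leaves the general case implicit, whereas you carry out the induction on $n$ explicitly, including the (correct) verification that the projected set $L'$ inherits the slice-limit-point property.
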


\begin{proof}For simplicity take $n=2$. For fixed $(z_0,w_0)\in L$, the function $f(\cdot,w_0)$ is holomorphic on $U_1$ and identically zero on $\{z\in U_1:(z,w_0)\in L\}$, hence by the principle of analytic continuation, $f(z,w_0)=0$ for all $z\in U_1$. Therefore, $f(z,w)=0$ for all $z\in U_1$ and all $w\in U_2$ such that $(z_0,w)\in L$.  For fixed $z\in U_1$, the function $f(z,\cdot)$ is holomorphic on $U_2$ and identically zero on $\{w\in U_2:(z_0,w)\in L\}$ hence again by analytic continuation, it is identically zero on $U_2$. Thus, $f(z,w)=0$ for all $(z,w)\in U$.\end{proof}

We momentarily return to the case of the polydisc. Let $C^p(\T^n)$ denote the space of functions $f:\T^n\to \C$ whose derivatives
\begin{equation*}\frac{\partial ^{\alpha}f}{\partial \theta^{\alpha}}(e^{i\theta_1},...,e^{i\theta_n})=\frac{\partial ^{|\alpha|}f}{\partial \theta_1^{a_1}\partial \theta_2^{a_2}\cdots \partial \theta_n^{a_n}}(e^{i\theta_1},...,e^{i\theta_n})\end{equation*}
exist and are continuous on $\T^n$ for all $\alpha=(a_1,...,a_n)\in \N^n$ with $|\alpha|=a_1+\cdots+a_n\le p$. The space $C^p(\T^n)$, which is topologized by the seminorms
\begin{equation}|f|_{\alpha}=\sup_{\theta_i\in \R}\left|\frac{\partial ^{\alpha}f}{\partial \theta^{\alpha}}(e^{i\theta_1},...,e^{i\theta_n})\right|\text{ , }|\alpha|\le p\end{equation}
is a Banach space for $p<+\infty$ and a Fr\'echet space for $p=+\infty$. 

If $f\in A^p(D^n)$ then its restriction $f|_{\T^n}$ is in $C^p(\T^n)$, as one can easily see. To show that the restriction map is an embedding of Fr\'echet spaces, one has to note that the usual topology on $A^p(D^n)$ given by the seminorms
\begin{equation}|f|_{\alpha}=\sup_{z\in D^n}\left|\frac{\partial ^{\alpha}f}{\partial z^{\alpha}}(z)\right|=\sup_{\theta_i\in \R}\left|\frac{\partial ^{\alpha}f}{\partial z^{\alpha}}(e^{i\theta_1},...,e^{i\theta_n})\right|\end{equation}
(the last equality follows from the maximum modulus Theorem) is also induced by the seminorms
\begin{equation}|f|_{\alpha,*}=\sup_{z\in D^n}\left|\frac{\partial ^{\alpha}f}{\partial \theta^{\alpha}}(z)\right|\end{equation}
To see this, note that the derivative with respect to $\theta^{\alpha}$ is a complex polynomial of the derivatives with respect to $z^{\beta}$, $|\beta|\le |\alpha|$, $z_i=e^{i\theta_i}$. This can be proven via induction on $|\alpha|$. The base case $|\alpha|=1$ is the following:
\begin{equation}\frac{\partial f}{\partial \theta_i}=\frac{\partial f}{\partial e^{i\theta_i}}\frac{\partial e^{i\theta_i}}{\partial \theta_i}=\frac{\partial f}{\partial z_i}ie^{i\theta_i}\end{equation}
The space $A^p(D^n)$ is thus embedded canonically in $C^p(\T^n)$, via the restriction map.

\begin{thm}\label{CpTn}For $p\in \{0,1,...\}\cup \{+\infty\}$, the collection of functions in $C^p(\T^n)$ that are not real analytic at a fixed point of $\T^n$ (or at all points of $\T^n$) is a dense $G_{\delta}$ subset of $C^p(\T^n)$.\end{thm}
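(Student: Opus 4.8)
The plan is to mimic the proof of Theorem \ref{TheoremWithClosedSet}, now carried out in the finer topology of $C^p(\T^n)$ rather than in $C(L)$. The starting observation is Proposition \ref{RealAnalyticIsHolomorphicallyExtendable}: a function $f:\T^n\to\C$ is real analytic at a point of $\T^n$ exactly when it is holomorphically extendable there. Hence it suffices to prove that, for a fixed $\zeta\in\T^n$, the set of $f\in C^p(\T^n)$ that are \emph{not} holomorphically extendable at $\zeta$ is a dense $G_\delta$ of $C^p(\T^n)$; the ``all points'' version will then follow by applying Baire's Theorem over a countable dense subset of $\T^n$, using that real analyticity at one point propagates to a whole $\T^n$-neighbourhood of it (re-expand the power series of Proposition \ref{RealAndComplexAnalyticity} about nearby points). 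For $M,r\in(0,+\infty)$ I will set $S_{M,r}$ to be the collection of $f\in C^p(\T^n)$ admitting a holomorphic $F$ on $B(\zeta,r)$ with $|F|\le M$ and $F=f$ on $B(\zeta,r)\cap\T^n$. These sets increase with $M$ and with $1/r$, their union over all $M,r$ is precisely the set of functions holomorphically extendable at $\zeta$, and $C^p(\T^n)$ is a complete metric space (Banach for $p<+\infty$, Fr\'echet for $p=+\infty$); so the task reduces to showing each $S_{M,r}$ is closed with empty interior and then invoking Baire's Theorem on the exhaustion $M\in\N$, $r=1/k$.

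Closedness I expect to be immediate and essentially identical to the $C(L)$ case: convergence in $C^p(\T^n)$ forces uniform convergence, so for $f_m\in S_{M,r}$ with extensions $F_m$ converging to $f$, Montel's Theorem produces a subsequence $F_{k_m}$ tending locally uniformly on $B(\zeta,r)$ to a holomorphic $F$ with $|F|\le M$ and $F=f$ on $B(\zeta,r)\cap\T^n$, so $f\in S_{M,r}$.

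The step I expect to be the real obstacle is emptiness of the interior, and it is here that the $C^p$ setting forces a change from the proof of Theorem \ref{TheoremWithClosedSet}: the perturbing function must now be globally $C^p$ on $\T^n$, so Tietze's Extension Theorem is no longer the right tool; instead I will exploit that $\T^n$ admits a separating complex hyperplane disjoint from \emph{all} of $\T^n$. Concretely, assuming $f$ lies in the interior of $S_{M,r}$, controlled to within $\epsilon$ by the seminorms $|\cdot|_\alpha$ with $|\alpha|\le l$, I will pick $0<\eta<r$ and put $w=((1+\eta)\zeta_1,\zeta_2,\dots,\zeta_n)\in B(\zeta,r)\setminus\T^n$; the hyperplane $H=\{z\in\C^n:z_1=w_1\}$ then misses $\T^n$ entirely because $|w_1|=1+\eta\neq1$, so $h(z)=\delta/(z_1-w_1)$ is holomorphic on $\C^n\setminus H\supseteq\T^n$, whence $h|_{\T^n}\in C^\infty(\T^n)$, and since $|z_1-w_1|\ge\eta$ on $\T^n$ each seminorm $|h|_\alpha$ is bounded by a constant (depending only on $\eta$ and $\alpha$) times $\delta$. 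Choosing $\delta$ small makes $f+h\in S_{M,r}$; taking bounded holomorphic extensions $F$ of $f$ and $G$ of $f+h$ on $B(\zeta,r)$ yields $G-F=h$ on $B(\zeta,r)\cap\T^n$. Finally I will push this identity across $B(\zeta,r)\setminus H$ exactly as in Theorem \ref{TheoremWithClosedSet}: Lemma \ref{AnalyticContinuationVersion} applies on a small polydisc about $\zeta$ contained in $B(\zeta,r)\setminus H$ (its one-variable slices meet $\T^n$ in nonempty arcs, which have limit points), and then connectedness of $B(\zeta,r)\setminus H$ (a real-codimension-two deletion from a ball) gives $G-F=h$ throughout $B(\zeta,r)\setminus H$ --- contradicting that $G-F$ is bounded while $h(z)$ blows up as $z_1\to w_1$. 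This contradiction establishes that $S_{M,r}$ has empty interior and completes the argument; in dimension $n=1$ it recovers the $C^p(\T)$ result of \cite{BigPaper}.
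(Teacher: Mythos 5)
Your proposal is correct and follows essentially the same route as the paper: reduce to holomorphic extendability via Proposition \ref{RealAnalyticIsHolomorphicallyExtendable}, run the closed/empty-interior Baire argument of Theorem \ref{TheoremWithClosedSet} with the seminorms of $C^p(\T^n)$, and exploit that the separating hyperplane can be taken to be a coordinate hyperplane $z_i=w_i$ with $w_i\notin\T$, so that $h(z)=\delta(z_i-w_i)^{-1}$ is globally $C^\infty$ on $\T^n$ and the Tietze step is unnecessary. The only cosmetic difference is that you fix $w_1=(1+\eta)\zeta_1$ explicitly, while the paper merely observes that some coordinate of $w$ lies off $\T$; both yield the same contradiction via Lemma \ref{AnalyticContinuationVersion} and the connectedness of $B(\zeta,r)\setminus H$.
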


\begin{proof}Real analyticity is equivalent to holomorphic extendability by Proposition \ref{RealAnalyticIsHolomorphicallyExtendable}. So the case $p=0$ follows directly from Theorem \ref{TheoremWithClosedSet}. The cases $p=1,...,+\infty$ can be proven similarly to the proof of Theorem \ref{TheoremWithClosedSet}. Equation \eqref{DistanceOfFunctionsUni} has to be changed to
\begin{equation}\label{CpTnequation}\sup_{z\in \T^n}\left|\frac{\partial^{\alpha}f}{\partial \theta^{\alpha}}(z)-\frac{\partial^{\alpha}g}{\partial \theta^{\alpha}}(z)\right|<\epsilon\text{ , }\forall \alpha\in \N^n\text{ , }|\alpha|\le l\end{equation}
where $l\in \N$, $l\le p$, depends only on $\epsilon$ (and $f,\zeta$ of course). Note that the complex hyperplane can always be chosen to have an orthogonal coordinate vector: if $(w_1,...,w_n)\notin \T^n$ then some $w_i$ is not in $\T$, and we can use the hyperplane $z_i=w_i$ that is orthogonal to the vector $e_i=(0,...,0,1,0,...,0)$, the unit being in the $i$-th position. Thus, the function $h(z)=\delta (z_i-w_i)^{-1}$ is in $C^p(\T^n)$ (so the argument with Tietze's Theorem need not be used), and for small enough $\delta>0$, it is easy to see that \eqref{CpTnequation} is verified for $g=f+h$. 
 \end{proof}

\begin{thm}\begin{itemize}
\item[1.] The collection of functions in $C(\partial D^n)$ that are not holomorphically extendable at a fixed point of $\partial D^n$ (or at all points of $\partial D^n$) is a dense $G_{\delta}$ of $C(\partial D^n)$.\smallbreak
\item[2.] The collection of functions in $C(\partial B^n)$ that are not holomorphically extendable at a fixed point of $\partial B^n$ (or at all points of $\partial B^n$) is a dense $G_{\delta}$ of $C(\partial B^n)$.
\end{itemize}
\end{thm}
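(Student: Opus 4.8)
Here is the plan I would follow.

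The plan is to reduce both statements to the material already developed, principally to Theorem \ref{TheoremWithClosedSet}. For part 1 I would apply Theorem \ref{TheoremWithClosedSet} to the compact set $L=\partial D^n$, so it suffices to verify its two hypotheses. Being a real hypersurface of $\R^{2n}$, the set $\partial D^n$ has empty interior, so the alternative ``$B(\zeta,r_0)\subseteq L$'' never occurs and only the hyperplane condition must be produced. Given $\zeta\in\partial D^n$ and $r>0$, pick an index $j$ with $|\zeta_j|=1$; for small $\epsilon>0$ the point $w=\zeta+\epsilon\zeta_j e_j$ lies in $B(\zeta,r)$ with $|w_j|=1+\epsilon>1$, so $w\notin\overline{D^n}$, and the coordinate hyperplane $H=\{z:z_j=w_j\}=w+\{e_j\}^{\perp}$ has $|z_j|=1+\epsilon>1$ for every $z\in H$, hence is disjoint from $\overline{D^n}$, a fortiori from $B(\zeta,r)\cap\partial D^n$. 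For the second hypothesis, fix $z\in\partial D^n$ inside a product $U=U_1\times\cdots\times U_n$ of domains and fix $i$; recalling that $\partial D^n=\{|z_k|\le1\ \text{for all }k\}\setminus\{|z_k|<1\ \text{for all }k\}$, either some $z_m$ with $m\ne i$ has modulus $1$, so that the $i$-th slice set equals $U_i\cap\overline D$ and contains the nonempty open set $U_i\cap D$, or every $z_m$ with $m\ne i$ has modulus $<1$, which forces $|z_i|=1$ and makes the $i$-th slice set $U_i\cap\T$, a subarc of $\T$ through $z_i$. In both cases the slice set has a limit point in $U_i$, so Theorem \ref{TheoremWithClosedSet} applies and part 1 follows, in both the fixed-point and the all-points form.

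For part 2 the first hypothesis is obtained the same way: for $\zeta\in\partial B^n$ and small $\epsilon>0$ put $w=(1+\epsilon)\zeta$ and $H=w+\{\zeta\}^{\perp}=\{z:\langle z,\zeta\rangle=1+\epsilon\}$; the Cauchy--Schwarz inequality gives $\|z\|\ge|\langle z,\zeta\rangle|=1+\epsilon$ for all $z\in H$, so $H$ is disjoint from $\overline{B^n}$, while $w\in B(\zeta,r)\setminus\overline{B^n}$. The \emph{second} hypothesis, however, genuinely fails for $L=\partial B^n$: at a point such as $(1,0,\dots,0)$ the slice set in the second coordinate is $\{w:|w|^2=0\}=\{0\}$, which has no limit point. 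This is the one real difficulty, so part 2 is not a formal corollary of Theorem \ref{TheoremWithClosedSet}; instead the plan is to rerun the proof of that theorem essentially verbatim, changing only the single place where the second hypothesis enters, namely the appeal to Lemma \ref{AnalyticContinuationVersion} that upgrades ``$G-F=h$ on $U\cap L$'' to ``$G-F=h$ on $U$'' for the polydisc $U$ centered at $\zeta$ and contained in $B(\zeta,r')\setminus H$.

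That one step I would replace by the following uniqueness statement for the sphere, which holds at \emph{every} point: if $\Omega\subseteq\C^n$ is open and connected with $\partial B^n\cap\Omega\ne\emptyset$ and a holomorphic $f$ on $\Omega$ vanishes on $\partial B^n\cap\Omega$, then $f\equiv0$ on $\Omega$. To prove it, pick $z^0\in\partial B^n\cap\Omega$; since $\|z^0\|=1$, some coordinate $z^0_j$ is nonzero. Choose a polydisc $P=P_1\times\cdots\times P_n\subseteq\Omega$ centered at $z^0$ and foliate it by the complex lines parallel to $e_j$: on such a line the coordinates other than the $j$-th are fixed to some $b$ near $(z^0_k)_{k\ne j}$, and the line meets $\partial B^n$ in the circle $|z_j|=\rho(b):=\sqrt{1-\sum_{k\ne j}|b_k|^2}$, a number close to $|z^0_j|>0$, so this circle meets the disk $P_j$ in a nonempty arc. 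The restriction of $f$ to this line is a one-variable holomorphic function on $P_j$ vanishing on that arc, hence identically zero on $P_j$; letting $b$ vary over the other factors of $P$ gives $f\equiv0$ on $P$, and then $f\equiv0$ on $\Omega$ by analytic continuation. With this substitute in hand, the proof of Theorem \ref{TheoremWithClosedSet} goes through unchanged: $G-F-h$ is holomorphic on $U\subseteq B(\zeta,r')\setminus H$ and vanishes on $\partial B^n\cap U$ (nonempty, as it contains $\zeta$), hence vanishes on $U$, so $G-F=h$ on the connected set $B(\zeta,r')\setminus H$, contradicting that $h$ is unbounded near $w$. Finally, Baire's theorem applied along a countable dense set of base points $\zeta\in\partial B^n$ promotes the fixed-point conclusion to the all-points one, completing part 2. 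The main obstacle, as indicated, is that $\partial B^n$ does not satisfy the second hypothesis of Theorem \ref{TheoremWithClosedSet}, forcing the replacement of the coordinate-slicing lemma by the complex-line-foliation uniqueness statement above; everything else is routine bookkeeping.
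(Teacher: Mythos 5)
Your argument is correct. For part 1 it coincides with the paper's proof: both reduce to Theorem \ref{TheoremWithClosedSet}, the paper only remarking that the separating hyperplane $z_j=w_j$ (with $|w_j|>1$) misses all of $\overline{D^n}$, while you additionally write out the verification of the slice/limit-point hypothesis for $\partial D^n$, which does hold exactly as you describe. For part 2 you deviate in a substantive and justified way. The paper asserts that the sphere case also follows from Theorem \ref{TheoremWithClosedSet}, but you are right that the second hypothesis of that theorem genuinely fails for $L=\partial B^n$: at a point with a vanishing coordinate, say $(1,0,\dots,0)$, the slice in the corresponding variable is the singleton $\{0\}$, so Lemma \ref{AnalyticContinuationVersion} is not applicable as stated, and the paper's one-line derivation has a gap precisely at the step where ``$G-F=h$ on $U\cap L$'' must be upgraded to ``$G-F=h$ on $U$''. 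Your substitute --- foliating a small polydisc around a boundary point $z^0$ by complex lines in a direction $e_j$ with $z^0_j\neq 0$, so that each line meets $\partial B^n$ in a circle of radius near $|z^0_j|>0$ and the one-variable identity theorem applies leaf by leaf --- is a correct uniqueness statement valid at \emph{every} point of the sphere, and with it the remainder of the proof of Theorem \ref{TheoremWithClosedSet} (Montel for closedness of $S_{M,r}$, the Tietze-extended function $h=\delta\langle z-w,v\rangle^{-1}$ for empty interior, Baire over a dense set of base points) runs unchanged. An equally quick alternative at that step is to observe that the zero set of a nonzero holomorphic function on a connected open set has real dimension $2n-2$, hence cannot contain the $(2n-1)$-dimensional piece $U\cap\partial B^n$. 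So your write-up not only proves the theorem but repairs an imprecision in the paper's own citation of Theorem \ref{TheoremWithClosedSet}; the only cost is the extra (short) uniqueness lemma for the sphere.
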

\begin{proof}These follow from Theorem \ref{TheoremWithClosedSet}. Note that by picking the point $w$ in that Theorem to be outside of $D^n$ or $B^n$, we can have our hyperplane $H$ be disjoint from the boundaries $\partial D^n$ and $\partial B^n$ respectively. \end{proof}

Now let $\Omega_1,...,\Omega_n\subseteq \C$ be Jordan domains in the complex plane and let $\phi_i:D\to \Omega_i$ denote the Riemann mappings. By the Caratheodory-Osgood Theorem, the maps $\phi_i$ extend to homeomorphisms $\phi_i:\overline D\to \overline{\Omega_i}$. Let $\gamma_i=\phi_i|_{\T}$ and consider $\Omega=\Omega_1\times\cdots\times \Omega_1$, $\gamma=\gamma_1\times\cdots\times \gamma_n:\T^n\to \partial \Omega_1\times\cdots \times\partial \Omega_n$ and  $\phi=\phi_1\times\cdots \times \phi_n:\overline D^n\to \overline{\Omega}$. Obviously, $\gamma=\phi|_{\T^n}$.

The space $C^p(\gamma)$ consists of all functions $f:\gamma(\T^n)\to \C$ such that $f\circ \gamma\in C^p(\T^n)$. Its topology is defined by the seminorms 
\begin{equation}|f|_{\alpha}=\sup_{\theta_i\in \R}\left|\frac{\partial ^{\alpha}(f\circ \gamma)}{\partial \theta^{\alpha}}(e^{i\theta_1},...,e^{i\theta_n})\right|\end{equation}
where $\alpha\in \N^n$ is a multi-index with $|\alpha|\le p$. It is easy to see that $C^p(\gamma)$ and $C^p(\T^n)$ are isometrically isomorphic via the map $f\mapsto f\circ \gamma$.

If $\gamma_i\in C^p(\T)$ then $\phi_i\in A^p(D)$ and the converse is also true (\cite{Vlasis}). 

If $\gamma_i$ is an analytic curve (i.e. it extends to a biholomorphism between an open neighborhood of $\T$ and one of $\partial \Omega_i$)  then $\phi_i$ extends to a conformal equivalence between a larger disc $D(0,r)$, $r>1$, and an open neighborhood of $\overline{\Omega_i}$. This is a consequence of the Schwarz reflection principle (\cite{Alhfors}).  Therefore, if all $\gamma_i$ are analytic curves, then $\phi$ extends to a conformal equivalence between an open neighborhood of $\overline D^n$ and an open neighborhood of $\overline \Omega$.\medbreak

A function $f:\partial \Omega_1\times\cdots \partial \Omega_n\to \C$ is real analytic at a point $\gamma(t)$, $t\in \T^n$, if the composition $f\circ \gamma$ is real analytic at $t\in T^n$.

A function $f:\partial \Omega_1\times\cdots \partial \Omega_n\to \C$ is holomorphically extendable at a point in $\gamma(\T^n)$  if there is an open neighborhood $U$ of that point and a holomorphic function $F:U\to \C$ so that $F=f$ on $U\cap \T^n$.

\begin{thm}\label{RealAnalyticIsHolomorphicallyExtendableNo2}If all $\gamma_i$ are analytic curves, then $f$ is real analytic at a point in $\gamma(\T^n)$ if and only if it is holomorphically extendable at that point.\end{thm}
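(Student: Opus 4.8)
The plan is to reduce the statement to Proposition~\ref{RealAnalyticIsHolomorphicallyExtendable} by transporting the notion of holomorphic extendability across the biholomorphism supplied by the analytic curve hypothesis. Since every $\gamma_i$ is an analytic curve, the discussion preceding the theorem yields a conformal equivalence $\Phi$ from an open neighborhood $\mathcal U$ of $\overline D^n$ onto an open neighborhood $\mathcal V$ of $\overline\Omega$, with $\Phi|_{\overline D^n}=\phi$, and hence $\Phi|_{\T^n}=\gamma$. In particular $\Phi$ is injective on $\mathcal U$ and restricts to a homeomorphism of $\T^n$ onto $\gamma(\T^n)=\partial\Omega_1\times\cdots\times\partial\Omega_n$. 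Fix $t\in\T^n$ and set $\zeta=\gamma(t)\in\gamma(\T^n)\subseteq\overline\Omega\subseteq\mathcal V$.

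First I would record the one geometric fact that uses the product structure: for $w\in\mathcal V$, one has $\Phi^{-1}(w)\in\T^n$ if and only if $w\in\gamma(\T^n)$. Indeed, if $\Phi^{-1}(w)\in\T^n$ then $w=\Phi(\Phi^{-1}(w))\in\Phi(\T^n)=\gamma(\T^n)$; conversely, if $w\in\gamma(\T^n)$, pick $z\in\T^n$ with $\gamma(z)=w$, i.e. $\Phi(z)=w$, and then $z=\Phi^{-1}(w)$ by injectivity of $\Phi$ on $\mathcal U$.

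The two directions are then symmetric pull-back/push-forward arguments. Assume $f$ is holomorphically extendable at $\zeta$: there are an open $W\ni\zeta$ and a holomorphic $F:W\to\C$ with $F=f$ on $W\cap\gamma(\T^n)$. On the open neighborhood $\Phi^{-1}(W\cap\mathcal V)$ of $t$ put $H=F\circ\Phi$; if $z$ lies in this set and $z\in\T^n$, then $\Phi(z)=\gamma(z)\in W\cap\gamma(\T^n)$, so $H(z)=F(\gamma(z))=f(\gamma(z))=(f\circ\gamma)(z)$. Thus $f\circ\gamma$ is holomorphically extendable at $t$, hence real analytic at $t$ by Proposition~\ref{RealAnalyticIsHolomorphicallyExtendable}, which is precisely the definition of $f$ being real analytic at $\zeta$. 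Conversely, assume $f$ is real analytic at $\zeta$, i.e. $f\circ\gamma$ is real analytic at $t$; by Proposition~\ref{RealAnalyticIsHolomorphicallyExtendable} there are an open $V\ni t$ and a holomorphic $H:V\to\C$ with $H=f\circ\gamma$ on $V\cap\T^n$. On the open neighborhood $\Phi(V\cap\mathcal U)$ of $\zeta$ put $F=H\circ\Phi^{-1}$; if $w$ lies in this set and $w\in\gamma(\T^n)$, then by the geometric fact $z:=\Phi^{-1}(w)\in\T^n\cap V$, so $F(w)=H(z)=(f\circ\gamma)(z)=f(\gamma(z))=f(w)$, and $F$ holomorphically extends $f$ at $\zeta$.

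The argument is largely bookkeeping, and the step I would be most careful about is the geometric fact of the second paragraph. The subtlety is that $\gamma(\T^n)$ is only a small part of the full topological boundary $\partial\Omega$, so a priori the $\Phi$-preimage of a point of $\gamma(\T^n)$ could land anywhere in $\partial D^n\supsetneq\T^n$; it is the injectivity of $\Phi$ on $\mathcal U$ together with $\gamma=\Phi|_{\T^n}$ surjecting onto $\gamma(\T^n)$ that forces this preimage back into the distinguished boundary $\T^n$. Everything else follows by unwinding the definition of real analyticity of $f$ through $f\circ\gamma$ and invoking Proposition~\ref{RealAnalyticIsHolomorphicallyExtendable}.
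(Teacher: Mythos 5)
Your proposal is correct and follows the same route the paper intends: the paper's proof is the single line ``This follows from Proposition~\ref{RealAnalyticIsHolomorphicallyExtendable},'' implicitly relying on the conformal equivalence $\Phi$ between neighborhoods of $\overline{D}^n$ and $\overline{\Omega}$ that you spell out. Your careful verification that $\Phi^{-1}$ maps $\gamma(\T^n)$ back into the distinguished boundary $\T^n$ is exactly the detail the paper leaves to the reader, and it is handled correctly.
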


\begin{proof}This follows from Proposition \ref{RealAnalyticIsHolomorphicallyExtendable}.\end{proof}

Theorems \ref{CpTn} and \ref{RealAnalyticIsHolomorphicallyExtendableNo2} imply that:

\begin{thm}If $p\in \{0,1,...\}\cup \{+\infty\}$ and all $\gamma_i$ are analytic curves, then the collection of functions in $C^p(\gamma)$ that are not real analytic at a fixed point (or at all points) of $\gamma(\T^n)$ is a dense $G_{\delta}$ of $C^p(\gamma)$\end{thm}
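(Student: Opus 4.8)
The plan is to transport the genericity statement from the torus $\T^n$ to the abstract boundary $\gamma(\T^n)$ through the isometric isomorphism $C^p(\gamma)\cong C^p(\T^n)$, $f\mapsto f\circ\gamma$, and to check that under this correspondence ``not real analytic at $\gamma(t)$'' matches ``not real analytic at $t$''. By Theorem~\ref{CpTn}, the set $R_t\subseteq C^p(\T^n)$ of functions that fail to be real analytic at a fixed $t\in\T^n$ is a dense $G_\delta$; pulling $R_t$ back along the isomorphism gives a dense $G_\delta$ in $C^p(\gamma)$, and I must argue this pulled-back set is exactly $\{f\in C^p(\gamma): f\text{ is not real analytic at }\gamma(t)\}$. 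But that is immediate from the \emph{definition} of real analyticity for functions on $\gamma(\T^n)$ adopted just before the statement: $f$ is real analytic at $\gamma(t)$ precisely when $f\circ\gamma$ is real analytic at $t$. So $f\mapsto f\circ\gamma$ carries the target set in $C^p(\gamma)$ bijectively onto $R_t$, and since a homeomorphism (indeed isometry) of Fréchet spaces preserves dense $G_\delta$ sets, we are done for a fixed point.

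For the ``at all points'' version I would run the standard Baire argument: pick a countable dense subset $\{\gamma(t_k)\}_{k\in\N}$ of $\gamma(\T^n)$ (using that $\{t_k\}$ is dense in $\T^n$ and $\gamma$ is a homeomorphism onto its image, which it is, being a product of the boundary homeomorphisms $\gamma_i$ from the Caratheodory--Osgood theorem). The set of functions in $C^p(\gamma)$ real analytic at no point of $\gamma(\T^n)$ contains $\bigcap_k\{f: f\text{ not real analytic at }\gamma(t_k)\}$; wait --- more carefully, I should note that real analyticity at a point is an open-neighborhood property, so a function real analytic at \emph{some} point is real analytic at one of the $\gamma(t_k)$ in a suitable sense only after a small argument. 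The clean route, already used repeatedly in the paper, is: if $f$ is real analytic at $\gamma(t)$ then it is holomorphically extendable there (Theorem~\ref{RealAnalyticIsHolomorphicallyExtendableNo2}), hence there is a whole neighborhood $U$ on which an extension $F$ lives, so $f$ is holomorphically extendable --- and therefore real analytic --- at every $\gamma(t_k)$ with $\gamma(t_k)\in U$. Thus the complement of the ``nowhere real analytic'' set is contained in $\bigcup_k\{f: f\text{ real analytic at }\gamma(t_k)\}$, a countable union of meager sets, so the ``nowhere'' set contains a dense $G_\delta$; combined with the fixed-point statement (which shows each $\{f:\text{not real analytic at }\gamma(t_k)\}$ is a dense $G_\delta$) and Baire's theorem, the ``nowhere real analytic'' set is itself a dense $G_\delta$.

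There is essentially no hard step here: the content was already extracted in Theorems~\ref{CpTn} and \ref{RealAnalyticIsHolomorphicallyExtendableNo2}, and what remains is bookkeeping about the isometry $C^p(\gamma)\cong C^p(\T^n)$ and the neighborhood-openness of extendability. If I had to name the one place needing a word of care, it is the transition between ``real analytic at a single listed point $\gamma(t_k)$'' and ``real analytic at all points'': one must invoke that holomorphic/real-analytic extendability at a point propagates to all nearby points of $\gamma(\T^n)$ (via Theorem~\ref{RealAnalyticIsHolomorphicallyExtendableNo2} and the openness of the domain $U$ of the extension), which is exactly why the hypothesis that all $\gamma_i$ be analytic curves is used. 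I would write the proof in two short paragraphs --- first the fixed-point case via the isometry, then the all-points case via Baire --- and keep it brief, since it is genuinely a corollary.
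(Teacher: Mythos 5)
Your proposal is correct and follows the paper's own (essentially one-line) derivation: the result is transported from Theorem~\ref{CpTn} through the isometric isomorphism $f\mapsto f\circ\gamma$ between $C^p(\gamma)$ and $C^p(\T^n)$, with Theorem~\ref{RealAnalyticIsHolomorphicallyExtendableNo2} supplying the link between real analyticity and holomorphic extendability. Your second paragraph is more elaborate than necessary --- since real analyticity at $\gamma(t)$ is \emph{defined} as real analyticity of $f\circ\gamma$ at $t$ and $\gamma$ is a bijection onto its image, the ``at all points'' set is also the exact preimage under the isometry of the corresponding dense $G_\delta$ already provided by Theorem~\ref{CpTn}, so no separate Baire/propagation argument is required.
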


Note that the definition of holomorphic extendability also makes sense for continuous curves $\gamma_i:\T\to \C$, with $\gamma=\gamma_1\times...\times \gamma_n:\T^n\to \C^n$. We then have the following:

\begin{thm}If $p\in \{0,1,...\}\cup \{+\infty\}$ and all $\gamma_i$ are locally injective $C^p$ curves, then the collection of functions in $C^p(\gamma)$ that are not holomorphically extendable at a fixed point (or at all points) of $\gamma(\T^n)$ is a dense $G_{\delta}$ of $C^p(\gamma)$\end{thm}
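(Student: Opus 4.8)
The plan is to deduce the result from the closed-set theory already developed, applied to the compact set $L:=\gamma(\T^n)=\gamma_1(\T)\times\dots\times\gamma_n(\T)\subseteq\C^n$, and then to pass from $p=0$ to $p\ge 1$ exactly as Theorem~\ref{CpTn} passes from Theorem~\ref{TheoremWithClosedSet}. The starting point is two elementary observations about a locally injective $C^p$ curve $\gamma_i:\T\to\C$, both needing only continuity and local injectivity. First, $\gamma_i(\T)$ has empty interior in $\C$: covering $\T$ by finitely many arcs on which $\gamma_i$ is injective exhibits $\gamma_i(\T)$ as a finite union of Jordan arcs, and a Jordan arc, being a homeomorphic image of $[0,1]$, cannot contain an open subset of $\C$ by invariance of domain, so the finite union has empty interior. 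Second, $\gamma_i(\T)$ is perfect: for each $t$, local injectivity and continuity make $\gamma_i(s)$ a point of $\gamma_i(\T)$ distinct from $\gamma_i(t)$ and tending to it as $s\to t$. These two facts yield the two hypotheses of Theorem~\ref{TheoremWithClosedSet} for $L$: the first makes $L$ have empty interior, so the alternative ``$B(\zeta,r_0)\subseteq L$'' never occurs, and given $\zeta=(\zeta_1,\dots,\zeta_n)\in L$ and $r>0$ one picks $w_1\notin\gamma_1(\T)$ with $|w_1-\zeta_1|<r$ and takes $w=(w_1,\zeta_2,\dots,\zeta_n)\in B(\zeta,r)$ together with the coordinate hyperplane $H=\{z:z_1=w_1\}\ni w$, which misses all of $L$; the second gives the limit-point hypothesis, since for $(z_1,\dots,z_n)\in U\cap L$ the slice $\{w\in U_i:(z_1,\dots,w,\dots,z_n)\in L\}$ equals $U_i\cap\gamma_i(\T)$, of which $z_i$ is a limit point lying in $U_i$. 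Hence the case $p=0$ is exactly Theorem~\ref{TheoremWithClosedSet} applied to $L$, because $C^0(\gamma)$ is literally the Banach space $C(\gamma(\T^n))$ — same functions (since $\gamma:\T^n\to\gamma(\T^n)$ is a quotient map, being a continuous surjection from a compact space onto a Hausdorff space) and same sup-norm (since $\gamma$ is onto).

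For $p\ge 1$ I would rerun the Baire-category scheme of Theorems~\ref{TheoremWithClosedSet} and~\ref{CpTn}. Fix $\zeta=\gamma(t_0)\in L$, and for $M,r>0$ let $S_{M,r}$ be the set of $f\in C^p(\gamma)$ admitting a holomorphic $F$ on $B(\zeta,r)$ with $|F|\le M$ and $F=f$ on $B(\zeta,r)\cap L$; the union of the $S_{M,r}$ over $M,r\in\{1/k:k\in\N\}$ is exactly the set of functions holomorphically extendable at $\zeta$, so it suffices to show each $S_{M,r}$ is closed with empty interior. Closedness is the usual Montel-plus-completeness argument, unchanged. For empty interior, suppose $f$ were an interior point; choosing $w_1\notin\gamma_1(\T)$ with $|w_1-\zeta_1|<r$ as above, put $h(z)=\delta(z_1-w_1)^{-1}$. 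Since $\gamma_1\in C^p(\T)$ and $w_1\notin\gamma_1(\T)$, the restriction $h|_L$ belongs to $C^p(\gamma)$ and each of its $C^p(\gamma)$-seminorms is proportional to $\delta$ with constant depending only on $\gamma_1$ and $w_1$; hence $g:=f+h|_L\in S_{M,r}$ for $\delta$ small. Subtracting the two bounded holomorphic extensions $G,F$ of $g,f$ gives $G-F=h$ on $B(\zeta,r)\cap L$; choosing a small polydisc $\widetilde U\ni\zeta$ inside $B(\zeta,r)\setminus H$ and applying Lemma~\ref{AnalyticContinuationVersion} to $G-F-h$ on $\widetilde U$ (whose limit-point hypothesis is again the perfectness of the $\gamma_i(\T)$), then continuing analytically over the connected set $B(\zeta,r)\setminus H$, forces $h$ to be bounded by $2M$ there — contradicting that $h(z)\to\infty$ as $z_1\to w_1$ inside $B(\zeta,r)$.

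Since $S_{M,r}^c$ is then open and dense and $C^p(\gamma)$ is a complete metric space, Baire's theorem gives that the functions not extendable at $\zeta$ form a dense $G_\delta$; intersecting over a countable dense sequence of points of $\gamma(\T^n)$ — and noting that the set of points at which a given $f$ is extendable is open in $\gamma(\T^n)$ — yields the ``all points'' statement. Essentially nothing beyond Theorems~\ref{TheoremWithClosedSet} and~\ref{CpTn} is new here; the one place where ``$C^p$'' rather than merely ``continuous'' is genuinely used is the verification that the coordinate perturbation $h|_L$ lies in $C^p(\gamma)$ with seminorms proportional to $\delta$, which is why the separating hyperplane is taken to be a coordinate one (so the Tietze step of Theorem~\ref{TheoremWithClosedSet} is avoided) and why $w_1$ is chosen off $\gamma_1(\T)$. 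I expect the only genuine subtlety to be the first observation above, namely that a locally injective continuous curve has empty interior even though it may have positive planar area.
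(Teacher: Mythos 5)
Your proposal is correct and follows essentially the same route as the paper: reduce to Theorems \ref{TheoremWithClosedSet} and \ref{CpTn}, using the coordinate hyperplane $z_i=w_i$ with $w_i\notin\gamma_i(\T)$ (so that $h=\delta(z_i-w_i)^{-1}$ lies in $C^p(\gamma)$ precisely because the $\gamma_i$ are $C^p$) and local injectivity of $\gamma$ for the limit-point condition of Theorem \ref{TheoremWithClosedSet}. The only substantive difference is that you make explicit the fact that $\gamma_i(\T)$ has empty interior in $\C$ --- needed to find $w_i\notin\gamma_i(\T)$ arbitrarily close to $\zeta_i$ --- a point the paper leaves implicit.
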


\begin{proof}The proof is similar to that of Theorem \ref{CpTn}. The assumption $\gamma\in C^p(\T)$ is needed to verify that the function $h$ defined in \eqref{QuotientDefined} is in $C^p(\gamma)$. The first condition of Theorem \ref{TheoremWithClosedSet} is satisfied since if $(w_1,...,w_n)\notin \gamma(\T^n)=\gamma_1(\T)\times\cdots\times \gamma_n(\T)$ then some $w_i$ is not $\gamma_i(\T)$, hence the complex hyperplane $z_i=w_i$ does not intersect $\gamma(\T^n)$. Local injectivity suffices for the second condition of Theorem \ref{TheoremWithClosedSet}, since $\gamma$ is locally a homeomorphism onto its image.\end{proof}

\section{One-sided extendability}\label{One-Sided Extendability}

We first consider different notions of one-sided holomorphic extendability for functions defined on $\T^n$. Because $\C^n\setminus \T^n$ is connected, the term ``one-sided'' is an abuse of terminology and the two sides are those given by the polydisc $D^n\subseteq \C^n$ and the complement ${\overline {D^n}}^c$.

A function $f:\T^n\to \C$ is holomorphically extendable from one side at a point $z\in \T^n$, if there is an open neighborhood $U$ of $z$ and a continuous function $F:[U\cap D^n]\cup \T^n\to \C$ that is holomorphic on $U\cap D^n$ and agrees with $f$ on $\T^n$. A slightly different definition would be to have $F$ defined and continuous on $\overline{U\cap D^n}$; after shrinking $U$, this essentially amounts to $F$ being continuous on $[U\cap D^n]\cup \partial D^n$. In the following discussion and Theorems, both two variants behave in the same way, so we shall consider them equivalent.

For technical reasons, it is useful to relax the continuity of $F$ to boundedness and separate continuity in its variables. We may also strengthen continuity of $F$ to Lipschitz continuity of $F$. Let us call these two notions of one-sided extendability as ``separately continuous one-sided holomorphic extendability'' and ``Lipschitz continuous one-sided holomorphic extendability'' respectively. 

Finally let us fix  $p\in \{0,1,...\}\cup \{+\infty\}$.

\begin{thm}\label{OneSidedTorus}\begin{itemize}\item[1.] The collection of functions in $C^p(\T^n)$ that are not holomorphically extendable from one side in the separate continuous sense at all points of $\T^n$ is a dense $G_{\delta}$ of $C^p(\T^n)$.   \smallbreak
\item[2.] The collection of functions in $C^p(\T^n)$ that are not holomorphically extendable from one side (in the usual sense) at all points of $\T^n$ is residual in $C^p(\T^n)$, i.e. it contains a dense $G_{\delta}$ subset.   \smallbreak
\item[3.] The collection of functions in $C^p(\T^n)$ that are not holomorphically extendable from one side in the Lipschitz continuous sense at all points of $\T^n$ is a dense $G_{\delta}$ of $C^p(\T^n)$.\end{itemize}\end{thm}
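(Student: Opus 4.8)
The plan is to mimic the Baire-category scheme used in Theorems \ref{HomolomorphicExtendabilityAp} and \ref{TheoremWithClosedSet}: for each notion of one-sided extendability and each point $\zeta\in\T^n$, write the set $E_\zeta$ of functions extendable at $\zeta$ as a countable union $\bigcup_{M,r}S_{M,r}^\zeta$ of ``closed with empty interior'' pieces, then intersect over a countable dense set of points $\zeta$ and apply Baire. Here $S_{M,r}^\zeta$ is the set of $f$ admitting a one-sided extension $F$ on $B(\zeta,r)\cap D^n$ (in the appropriate regularity class) with the relevant norm bounded by $M$. The three items differ only in the function space the extension $F$ lives in, and hence in which compactness/limiting argument gives closedness of $S_{M,r}^\zeta$ and which explicit perturbation $h$ kills its interior. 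For item 1 (separate continuity + boundedness), closedness follows from Montel applied on $B(\zeta,r)\cap D^n$ together with the observation that a bounded sequence separately continuous on the boundary has a separately continuous limit (pass to the limit one variable at a time); for item 3 (Lipschitz), a uniformly-bounded family of uniformly-Lipschitz extensions is equicontinuous, so Arzel\`a--Ascoli plus Montel gives a Lipschitz limit with the same constant. For item 2 we do not get a $G_\delta$ directly; instead we sandwich, since every continuous one-sided extension is in particular separately continuous and bounded locally, and every Lipschitz one is in particular continuous, so
\begin{equation*}\{\text{not Lip.-extendable anywhere}\}\subseteq\{\text{not cont.-extendable anywhere}\}\subseteq\{\text{not sep.-cont.-extendable anywhere}\},\end{equation*}
and items 1 and 3 show the two ends each contain a dense $G_\delta$; hence the middle set is residual.

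For the empty-interior step, the key input is the same as before: at any $\zeta\in\T^n$ some coordinate $\zeta_i$ lies on $\T$, and we want a perturbation $h$ that is genuinely singular from the $D^n$-side near $\zeta$. The natural choice is a Blaschke-type or boundary-singular factor in the $i$-th variable, e.g. $h(z)=\delta\,\sum_k c_k (z_i-\omega_{i,k})^{-1}$ or, to respect $C^p(\T^n)$, a lacunary/Weierstrass-type series in $z_i$ that is $C^p$ on $\overline{D^n}$ but fails to have a one-sided analytic continuation at $\zeta_i$; one must check $f+h\in C^p(\T^n)$ and that $h$ is small in the seminorms \eqref{CpTnequation} for small $\delta$. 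Then if $g=f+h\in S_{M,r}^\zeta$ with extensions $F,G$, the difference $G-F$ would be a bounded (resp. separately continuous, resp. Lipschitz) holomorphic one-sided extension of $h$ near $\zeta$, and we derive a contradiction from the prescribed boundary singularity of $h$ — boundedness is contradicted directly for item 1, and for item 3 one contradicts the Lipschitz bound (the chosen $h$ has unbounded difference quotients approaching $\zeta$ from inside $D^n$). The uniqueness/propagation that lets us pass from ``agrees on $\T^n\cap U$'' to ``$G-F=h$ on $U\cap D^n$'' is just the identity theorem on the connected open set $U\cap D^n$, using that $\T^n\cap U$ is in the boundary and the extensions are continuous up to it (for item 1, separate continuity suffices since we may slice).

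The main obstacle I expect is the empty-interior argument for item 3 in the smooth categories $p\ge 1$: we need a single $h$ that simultaneously (a) belongs to $C^p(\T^n)$, (b) is arbitrarily small in the finitely many seminorms $|\cdot|_{\alpha,N}$, $|\alpha|\le l$, and (c) admits no Lipschitz one-sided holomorphic extension at $\zeta$ — and Lipschitz continuity up to the boundary is a fairly weak obstruction, so the singularity of $h$ must be correspondingly mild, yet still incompatible with one-sided analytic continuation. A clean way around this is to build $h$ as a function of the single variable $z_i$ whose one-variable boundary behavior at $\zeta_i$ is known to obstruct Lipschitz extension from $D$ (for instance a suitably tuned gap series, or a function smooth on $\overline D$ whose radial limit fails to be real-analytic at $\zeta_i$), invoke the one-dimensional theory (implicitly, via Proposition \ref{RealAnalyticIsHolomorphicallyExtendable} in reverse and the classical fact that a Lipschitz one-sided extension forces the boundary function to be ``too regular'' at that point), and then note that one-sided extendability at $\zeta$ in $n$ variables restricts to one-sided extendability in the $z_i$-slice, contradiction. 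Items 1 and 2 then follow with strictly less work, item 2 being a formal consequence of the sandwich once items 1 and 3 are in hand. Finally, the ``dense'' part of ``dense $G_\delta$'' is automatic: each $S_{M,r}^\zeta$ being closed with empty interior, its complement is open dense, and Baire gives density of the countable intersection.
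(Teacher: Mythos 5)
Your overall architecture (the sets $S_{M,r}$, Montel/Arzel\`a--Ascoli for closedness, the sandwich argument for item 2, Baire at the end) matches the paper, but there is a genuine gap at the heart of the empty-interior step, and it is precisely the point you flag as your ``main obstacle.'' You look for a perturbation $h$ with a \emph{boundary} singularity at $\zeta$ (gap series, failure of real analyticity, etc.) mild enough to lie in $C^p(\T^n)$ yet strong enough to obstruct even a Lipschitz one-sided extension; as you note, Lipschitz continuity up to the boundary is a weak obstruction, and you never actually produce such an $h$ or verify (a)--(c). The paper sidesteps this entirely: it takes $h(z)=\delta(z_1-w_1)^{-1}$ with $w\in U$ and $|w_1|<1$, so the pole sits at an \emph{interior} point of $V=U\cap D^n$, i.e.\ inside the very set where the one-sided extension must be holomorphic and bounded by $M$. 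Since $w_1\notin\T$, this $h$ is $C^\infty$ on $\T^n$ and small in the seminorms \eqref{CpTnequation} for small $\delta$, and once one shows $G-F=h$ on a slice of $V$, the unboundedness of $h$ at $w_1$ contradicts the bound $M$ directly --- the same contradiction serves all three regularity classes at once, so no delicate ``mild singularity'' is needed. Relatedly, your propagation step is not ``just the identity theorem on the connected open set $U\cap D^n$'': the agreement set $\T^n\cap U$ lies in the boundary of $U\cap D^n$ (it is totally real, of real dimension $n$ in $\C^n$), not in its interior, so two holomorphic functions continuous up to it and agreeing there do not trivially agree inside. The paper fixes $(z_2,\dots,z_n)\in\T^{n-1}$, restricts to the one-variable slice, observes that $G_1-F_1-\delta(z-w_1)^{-1}$ vanishes on an arc $A\subset\T$, and invokes the Schwarz reflection principle to extend it across $A$ before applying the one-variable identity theorem.

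A second, smaller gap is in the closedness of $S_{M,r}$ for item 1. Your ``observation that a bounded sequence separately continuous on the boundary has a separately continuous limit'' is not an observation: the $F_m$ converge only locally uniformly on the open part $V$ (via Montel) and uniformly on $\T^n$, and a locally uniform limit on the interior glued to a uniform limit on the boundary need not be continuous up to the boundary. The paper closes this by slicing in one variable and using that each slice $R_m$ is holomorphic on $D\cap D(\zeta_1,r)$ and continuous up to an arc of $\T$, so the Poisson representation (the proof of Proposition 6.4 of \cite{BigPaper}) forces the limit slice to remain continuous up to $\T$. Your Lipschitz argument for item 3 and the sandwich for item 2 are fine as stated.
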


\begin{proof}Obviously, the collection in item 2. contains that of item 1. and is contained in that of item 3. So it suffices to prove items 1. and 3.

Let us first show item 1. Fix $\zeta=(\zeta_1,...,\zeta_n)\in \T^n$, $M,r\in (0,+\infty)$, $U=D(\zeta_1,r)\times\cdots \times D(\zeta_n,r)$ and $V=D^n\cap U$. Let $S=S_{M,r}$ consist of all $f\in C^p(\T^n)$ for which there is a function $F:V\cup \T^n\to \C$ so that
\begin{itemize}\item $F$ is bounded by $M$ and is separately continuous in its variables
\item $F$ is holomorphic on $V$
\item $F=f$ over $\T^n$\end{itemize}
We will show that $S$ is closed with empty interior in $C^p(\T^n)$. The same argument works if we require $F$ to be defined on $V\cup \partial D^n$ and not just $V\cup \T^n$, and have the previous three properties.

To show that $S$ is closed, let $f_m\in S$ converge to $f$ in the topology of $C^p(\T^n)$. There are functions $F_m$ with the previous three properties (but this time $F_m=f_m$ over $\T^n$). By Montel's Theorem, there is a subsequence $F_{k_m}$, that we take to be $F_m$ for simplicity, converging to some function $F$ uniformly on the compact subsets of $V$. The function $F$ is holomorphic on $V$, and we extend it over $\T^n$ by setting it equal to $f$. Since $F_m\to F$ point-wise over $V\cup \T^n$, $F$ is bounded by $M$, so it remains to show that it is separately continuous. If we fix $(z_1,...,z_n)\in V\cup \T^n$ then the functions $R_m(z)=F_m(z,z_2,...,z_n)$ are continuous on $[D\cap D(\zeta_1,r)]\cup \T$ and holomorphic on $D\cap D(\zeta_1,r)$. The function $R(z)=F(z,z_2,...,z_n)$ is also holomorphic on $D\cap D(\zeta_1,r)$ and $R_m\to R$ uniformly on the compact subsets of $[D\cap D(\zeta_1,r)]$ and uniformly on $\T$. By the proof of Proposition 6.4. of \cite{BigPaper}, which is based on the Poisson representation, we conclude that $R$ is continuous on $[D\cap D(\zeta_1,r)]\cup \T$, hence that $F$ is continuous in its first variable. We can similarly prove continuity with respect to its other variables. Therefore, $F$ has all three required conditions hence $f\in S$, which proves that $S$ is closed.

Let us assume that $S$ has an interior point $f$. Then there is some $\epsilon>0$ and $l\in \N$, $l\le p$, so that if $g\in C^p(\T^n)$ and
\begin{equation}\sup_{z\in \T^n}\left|\frac{\partial^{\alpha}f}{\partial \theta^{\alpha}}(z)-\frac{\partial^{\alpha}g}{\partial \theta^{\alpha}}(z)\right|<\epsilon\text{ , }\forall \alpha\in \N^n\text{ , }|\alpha|\le l\end{equation}
then $g\in S$. 
Pick $(w_1,...,w_n)\in U$ with $|w_1|<1$. If $h(z_1,...,z_n)=\delta(z_1-w_1)^{-1}$ for small $\delta>0$, then $g=f+h$ satisfies the inequality above, hence $g\in S$. If $F,G$ denote the separately continuous extensions of $F,G$ over $V\cup \T^n$, that are holomorphic on $V$ and bounded by $M$, then $G=F+h$ on $\T^n$. The functions $G_1(z)=G(z,z_2,...,z_n)$ and $F_1(z)=F(z,z_2,...,z_n)$ are continuous on $(D(\zeta_1,r)\cap D)\cup A$, where $A$ is an open arc contained in $\overline{D(\zeta_1,r)\cap D}\cap \T$, are holomorphic on $D(\zeta_1,r)\cap D$ and $G_1=F_1+\delta(z-w_1)^{-1}$ on $\T$. Thus, the function $R=G_1-F_1-\delta(z_1-w_1)^{-1}$ is continuous on $(D(\zeta_1,r)\cap D \setminus \{w_1\})\cup A$, holomorphic on $(D(\zeta_1,r)\cap D \setminus \{w_1\})$ and vanishes on $A$. By the Schwarz Reflection Principle, $R$ can be holomorphically extended over an open neighborhood of $A$, hence the principle of analytic continuation implies that $R=0$ on $D(\zeta_1,r)\cap D \setminus \{w_1\}$. Therefore, $G_1-F_1=\delta(z-w_1)^{-1}$ on $D(\zeta_1,r)\cap D\setminus \{w_1\}$, which is a contradiction since $G_1-F_1$ is bounded by $M$ as $z\to w_1$ while $\delta(z-w_1)^{-1}$ is not.

We have proven thus far that $S=S_{M,r}$ is closed with empty interior. By Baire's Theorem, $A_{\zeta}=\cap_{M,k\in \N_+}S_{M,1/k}$ is a dense $G_{\delta}$ subset of $C^p(\T^n)$, and if $\zeta_m\in \T^n$ is a dense sequence in $\T^n$ then $\cap_mA_{\zeta_m}$ is also a dense $G_{\delta}$. This is the collection described in item 1.

Proving item 3. is similar with the following differences: First, in the definition of $S$ we now require $F$ to be Lipschitz continuous and its Lipschitz constant to be less than $M$. Second, the proof of $S$ being closed is modified as follows: We have $f_m\to f$ in the topology of $C^p(\T^n)$, and Lipschitz continuous functions $F_m:V\cup \T^n\to \C$, holomorphic on $V$, bounded by $M$, with Lipschitz constants less than $M$, and with $F_m=f_m$ over $\T^n$.  The family of the $F_m$ is equicontinuous and bounded, hence by the Arzela-Ascoli Theorem, there is a subsequence  $F_{k_m}$ converging to some continuous function $F$, uniformly on $V\cup\T^n$. The function $F$ clearly has the properties all $F_m$ share, hence $f\in S$. The rest of the proof of item 3. is entirely analogous to that of item 1. \end{proof}

Note that the one-sided extensions we have considered are in the inner side of the unit polydisc. The same result is true for extensions from the outer side of $\partial D^n$, that is $\overline{D^n}^c$, as can be shown by composing by the biholomorphism $(z_1,...,z_n)\mapsto (1/z_1,...,1/z_n)$ (when $(z_1,...,z_n)$ is sufficiently close to a point in $\T^n$ then all $z_i\neq 0$).\medbreak

Now let $\Omega_i\subseteq \C$ be Jordan domains; we denote by $\phi_i$ the Riemann maps $D\to \Omega_i$ that extend to homeomorphisms $\gamma_i:\T\to \partial \Omega_i$ by the Caratheodory-Osgood Theorem. We set $\gamma=\gamma_1\times...\times \gamma_n$, $\phi=\phi_1\times...\times \phi_n$ and $\Omega=\Omega_1\times...\times \Omega_n$, as usual.

 A function $f\in C(\gamma)$ is holomorphically extendable from one side at a point $\zeta\in \gamma(\T^n)$ if there is a radius $r>0$ and a continuous function $F:[B(\zeta,r)\cap \Omega]\cup \gamma(\T^n)\to \C$ that is holomorphic on $B(\zeta,r)\cap \Omega$ and that agrees with $f$ on $\gamma(\T^n)$. 

As in the case of $\T^n$, we still have two additional notions of one-sided extendability. In the weaker one, we require $F$ to be bounded and separately continuous. In the stronger one, we have $F$ be Lipschitz continuous.
\begin{prop}\label{OneSidedAnalyticProp} If all $\gamma_i$ are $C^p$ smooth, then $f\in C^p(\gamma)$ is holomorphically extendable from one side at $\gamma(t_0)$ if and only if the function $f\circ \gamma\in C^p(\T^n)$ is holomorphically extendable from one side at $t_0\in \T^n$.\end{prop}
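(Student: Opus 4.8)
The plan is to transport one-sided extensions back and forth between $\Omega$ and $D^n$ along the product map $\phi=\phi_1\times\cdots\times\phi_n$. By the Caratheodory--Osgood Theorem, $\phi$ is a homeomorphism of $\overline D^n$ onto $\overline\Omega$ which restricts to a biholomorphism $D^n\to\Omega$ and to $\gamma$ on $\T^n$; likewise $\phi^{-1}$ is a homeomorphism $\overline\Omega\to\overline D^n$ restricting to a biholomorphism $\Omega\to D^n$, with $\gamma\circ\phi^{-1}=\id$ on $\gamma(\T^n)$. Throughout write $g=f\circ\gamma\in C^p(\T^n)$ and $\zeta=\gamma(t_0)=\phi(t_0)$.

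For the forward implication, suppose $f$ is holomorphically extendable from one side at $\zeta$: there are $r>0$ and a continuous $F:[B(\zeta,r)\cap\Omega]\cup\gamma(\T^n)\to\C$, holomorphic on $B(\zeta,r)\cap\Omega$, with $F=f$ on $\gamma(\T^n)$. Using continuity of $\phi$ on the compact set $\overline D^n$ together with $\phi(t_0)=\zeta$, I would choose an open $U\ni t_0$ with $\phi(\overline U\cap\overline D^n)\subseteq B(\zeta,r)$, and define $G$ on $[U\cap D^n]\cup\T^n$ by $G=F\circ\phi$ on $U\cap D^n$ and $G=g$ on $\T^n$. Since $\phi|_{D^n}$ is biholomorphic and carries $U\cap D^n$ into $B(\zeta,r)\cap\Omega$, the function $G$ is holomorphic on $U\cap D^n$, and it agrees with $g$ on $\T^n$ by construction. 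The one point needing care is that $G$ is \emph{continuous} on $[U\cap D^n]\cup\T^n$: continuity at interior points is immediate, and at a point $w_0\in\T^n$ one takes an arbitrary sequence in $[U\cap D^n]\cup\T^n$ tending to $w_0$ (only $w_0\in\overline U$ being relevant, since otherwise a neighborhood of $w_0$ misses $U$ and the sequence is eventually in $\T^n$), pushes it forward by the globally continuous $\phi$ to a sequence in $[B(\zeta,r)\cap\Omega]\cup\gamma(\T^n)$ tending to $\phi(w_0)\in\gamma(\T^n)\cap B(\zeta,r)$, and invokes continuity of $F$ at that point of its domain, using $F=f$ on $\gamma(\T^n)$ and $g=f\circ\gamma$ to identify the limit with $G(w_0)$. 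Hence $g$ is one-sided extendable at $t_0$.

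The converse is entirely symmetric. Given an open $U\ni t_0$ and a continuous $G:[U\cap D^n]\cup\T^n\to\C$ holomorphic on $U\cap D^n$ with $G=g$ on $\T^n$, pick $r>0$ with $\phi^{-1}(\overline{B(\zeta,r)}\cap\overline\Omega)\subseteq U$ and set $F=G\circ\phi^{-1}$ on $B(\zeta,r)\cap\Omega$ and $F=f$ on $\gamma(\T^n)$. Holomorphy on $B(\zeta,r)\cap\Omega$ comes from $\phi^{-1}|_\Omega$ being biholomorphic; the identity $\gamma\circ\phi^{-1}=\id$ on $\gamma(\T^n)$ gives $F=f$ there; and continuity of $F$ across $\gamma(\T^n)$ follows exactly as before, now from continuity of $\phi^{-1}$ on $\overline\Omega$ and of $G$ on its domain. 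This produces the required one-sided extension of $f$ at $\zeta$.

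I expect the only genuinely delicate step to be the boundary continuity in each direction --- handling sequences that mix interior points with points of the distinguished boundary $\gamma(\T^n)$ (resp.\ $\T^n$). It works precisely because $\phi$ is a homeomorphism of the \emph{full} closures, not merely a biholomorphism of the open sets glued to a homeomorphism of the distinguished boundaries. The hypothesis that each $\gamma_i$ is $C^p$ smooth enters only to situate the statement within the $C^p(\gamma)$ framework (so that $g=f\circ\gamma\in C^p(\T^n)$ is the natural object); the equivalence itself rests solely on $\phi$ being a homeomorphism $\overline D^n\to\overline\Omega$ that is biholomorphic on the interior, and the same transport scheme yields the corresponding equivalences for the separately continuous and the Lipschitz continuous variants.
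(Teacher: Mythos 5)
Your argument is correct and is exactly the transport-via-$\phi$ argument that the paper compresses into the single sentence ``this easily follows from the definitions''; you have simply written out the boundary-continuity details, which is the only point needing care. One caveat on your closing aside: as the paper notes immediately after the proposition, the Lipschitz variant of the equivalence additionally requires $\phi$ and $\phi^{-1}$ to be Lipschitz continuous, since composition preserves Lipschitz continuity only under that extra hypothesis.
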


\begin{proof}This easily follows from the definitions using the fact that $\phi\in A^p(D^n)$ if for all $i=1,...,n$ we have $\gamma_i\in C^p(\T)$.\end{proof}

Proposition \ref{OneSidedAnalyticProp}  holds for the notion of separately continuous one-sided extendability as well. For the notion of Lipschitz extendability, we should also add the condition that $\phi,\phi^{-1}$ be Lipschitz continuous.

\begin{thm}Let $\Omega,\gamma$ be as before and $\gamma$ be $C^p$ smooth.
\begin{itemize}\item[1.]The set of functions $C^p(\gamma)$ that are not holomorphically extendable from one side in the separate continuous sense at all points of $\gamma(\T^n)$, is a dense $G_{\delta}$ of $C^p(\gamma)$.
\smallbreak
\item[2.] The set of functions in $C^p(\gamma)$ that are not holomorphically extendable from one side (in the usual sense) at all points of  $\gamma(\T^n)$ is residual in $C^p(\gamma)$, i.e. it contains a dense $G_{\delta}$.   
\smallbreak
\item[3.] If $\phi,\phi^{-1}$ are Lipschitz continuous, the set of functions in $C^p(\gamma)$ that are not holomorphically extendable from one side in the Lipschitz continuous sense at all points of $\gamma(\T^n)$, is a dense $G_{\delta}$ in $C^p(\gamma)$.\end{itemize}\end{thm}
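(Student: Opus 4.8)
The plan is to deduce all three items from Theorem \ref{OneSidedTorus} by transporting everything along the map $\Phi\colon C^p(\gamma)\to C^p(\T^n)$, $\Phi(f)=f\circ\gamma$, which was shown earlier to be an isometric isomorphism of these two Fr\'echet spaces; in particular $\Phi$ and $\Phi^{-1}$ are homeomorphisms, so they send dense $G_{\delta}$ sets to dense $G_{\delta}$ sets and residual sets to residual sets.

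First I would fix one of the three notions of one-sided holomorphic extendability and record that, for $\gamma$ a $C^p$ curve (so that $\phi\in A^p(D^n)$, since $\gamma_i\in C^p(\T)$ for all $i$), Proposition \ref{OneSidedAnalyticProp} in the appropriate variant applies: the stated form for the usual notion, the form noted immediately after its proof for the separately continuous notion, and the form requiring in addition that $\phi,\phi^{-1}$ be Lipschitz for the Lipschitz notion. In each case the conclusion is that $f$ is extendable from one side at $\gamma(t_0)$ in that sense if and only if $\Phi(f)$ is extendable from one side at $t_0$ in that sense. Consequently $f\in C^p(\gamma)$ fails to be extendable from one side at every point of $\gamma(\T^n)$ precisely when $\Phi(f)$ fails to be extendable from one side at every point of $\T^n$; that is, the set $N$ in the relevant item of the present theorem satisfies $N=\Phi^{-1}(N')$, where $N'$ is the set in the matching item of Theorem \ref{OneSidedTorus}.

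For items 1 and 3 the set $N'$ is a dense $G_{\delta}$ of $C^p(\T^n)$, and for item 2 it is residual; since $\Phi^{-1}$ is a homeomorphism, $N=\Phi^{-1}(N')$ is a dense $G_{\delta}$ (respectively residual) in $C^p(\gamma)$, which is exactly the claim. For item 3 one uses here precisely the hypothesis that $\phi,\phi^{-1}$ are Lipschitz, which is what makes the Lipschitz variant of Proposition \ref{OneSidedAnalyticProp} available.

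The only place requiring a little care --- the ``main obstacle,'' such as it is --- lies entirely inside the three variants of Proposition \ref{OneSidedAnalyticProp}: checking that the (bi)holomorphic behavior of $\phi$ near $\overline{D^n}$ converts a one-sided extension over $B(\zeta,r)\cap\Omega$ into one over a suitable neighborhood of $t_0$ intersected with $D^n$ and back, compatibly with continuity, separate continuity, or (under the extra hypothesis) Lipschitz continuity with control of constants, and that shrinking neighborhoods on one side corresponds to shrinking them on the other. Since $\phi$ and $\phi^{-1}$ are continuous and open, a neighborhood basis of $t_0$ maps to a neighborhood basis of $\gamma(t_0)$, so this reduces to elementary composition estimates; everything else is bookkeeping already subsumed in the earlier statements.
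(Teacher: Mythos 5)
Your proposal is correct and follows exactly the paper's (very terse) proof, which simply cites Theorem \ref{OneSidedTorus} and Proposition \ref{OneSidedAnalyticProp}; you have merely spelled out the transport argument via the isometric isomorphism $f\mapsto f\circ\gamma$ and the matching of the three extendability notions under it. No gaps.
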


\begin{proof}This follows from Theorem \ref{OneSidedTorus} and Proposition \ref{OneSidedAnalyticProp}.\end{proof}

If all $\gamma_i$ are assumed to be analytic curves, then we can similarly prove an analogous Theorem for holomorphic extensions from the outer side of $\partial \Omega$, that is extensions in $\overline{\Omega}^c$.\medbreak

Another natural notion of one-sided holomorphic extendability concerns functions defined on the entire boundary $\partial D^n$ of the polydisc. 
A function $f:\partial D^n\to \C$ is holomorphically extendable from one side at $\zeta\in \partial D^n$, if there is an open neighborhood $U$ of $\zeta$ and a continuous function $F:[U\cap D^n]\cup \partial D^n\to \C$ that is holomorphic on $U\cap D^n$ and agrees with $f$ on $\partial D^n$. As usual, we have the two associated notions of separately continuous and Lipschitz continuous extensions.

\begin{thm}\label{OneSidedPolydiscFullBoundary}
\begin{itemize}\item[1.]The set of functions $C(\partial D^n)$ that are not holomorphically extendable from one side in the separate continuous sense at all points of $\partial D^n$, is a dense $G_{\delta}$ of $C(\partial D^n)$.\smallbreak
\item[2.] The set of functions in $C(\partial D^n)$ that are not holomorphically extendable from one side (in the usual sense) at all points of $\partial D^n$ is residual in $C(\partial D^n)$.\smallbreak
\item[3.] The set of functions in $C(\partial D^n)$ that are not holomorphically extendable from one side in the Lipschitz continuous sense at all points of  $\partial D^n$, is a dense $G_{\delta}$ in $C(\partial D^n)$.\end{itemize}\end{thm}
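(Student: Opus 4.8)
The plan is to run, essentially in parallel, the proof of Theorem~\ref{OneSidedTorus}, with $\partial D^n$ in place of $\T^n$ and $p=0$, while borrowing the Tietze-extension device from the proof of Theorem~\ref{TheoremWithClosedSet} in order to produce admissible perturbations. The starting point is a reduction. For each of the three notions, if $f$ is holomorphically extendable from one side at a point $\zeta\in\partial D^n$ via an extension $F$ defined on $[U\cap D^n]\cup\partial D^n$, then restricting $F$ to $[U'\cap D^n]\cup\partial D^n$ for any smaller neighborhood $U'\subseteq U$ of a point $\zeta'\in\partial D^n\cap U$ shows that $f$ is one-sided extendable (in the same sense) at $\zeta'$; hence the set of points of $\partial D^n$ at which $f$ is one-sided extendable is relatively open in $\partial D^n$. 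Consequently $f$ fails to be one-sided extendable everywhere if and only if it fails to be one-sided extendable at every point of some fixed countable dense subset $\{\zeta_m\}_m\subseteq\partial D^n$, and we are free to choose $\{\zeta_m\}$ to consist only of ``generic'' boundary points, that is, points with exactly one coordinate of modulus $1$ (these are dense in $\partial D^n$). Near such a point $\zeta$, say with $|\zeta_1|=1$ and $|\zeta_j|<1$ for $j\ge 2$, one may fix a polydisc neighborhood $U=D(\zeta_1,r)\times\cdots\times D(\zeta_n,r)$ small enough that $\partial D^n\cap U=\{z\in U:|z_1|=1\}$ and $D^n\cap U=V:=\{z\in U:|z_1|<1\}$; thus the local geometry is that of $\T\times(\text{polydisc})$ and the one-variable slicing in the first coordinate proceeds exactly as in Theorem~\ref{OneSidedTorus}.

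Fixing such a $\zeta$ and $M,r\in(0,+\infty)$, I would set $S=S_{M,r}$ to be the collection of $f\in C(\partial D^n)$ admitting an $F:V\cup\partial D^n\to\C$ that is bounded by $M$, separately continuous (respectively Lipschitz with constant $\le M$), holomorphic on $V$, and equal to $f$ on $\partial D^n$, and then show that $S$ is closed with empty interior; Baire's theorem applied to $\bigcap_{M,k\in\N}S_{M,1/k}$ and then intersected over $m$ finishes the proof. Closedness is proved as in Theorem~\ref{OneSidedTorus}: given $f_m\to f$ in $C(\partial D^n)$ with extensions $F_m$, Montel's theorem (respectively the Arzela--Ascoli theorem in the Lipschitz case) produces a subsequence converging locally uniformly on $V$ to a holomorphic $F$, which we extend by $f$ on $\partial D^n$; the separate continuity of $F$ up to $\partial D^n$ at a generic point follows by applying the Poisson-representation argument from the proof of Proposition~6.4 of \cite{BigPaper} to the one-variable slices through the ``lens'' $D\cap D(\zeta_1,r)$ and the arc $\T\cap D(\zeta_1,r)$, noting that on that arc the convergence $F_m\to F$ is uniform because $C(\partial D^n)$ carries the topology of uniform convergence on the compact set $\partial D^n$.

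For the empty interior I would argue by contradiction: if $f\in S^{\circ}$, then $f+h\in S$ for every $h\in C(\partial D^n)$ with $\|h\|_{\infty,\partial D^n}$ sufficiently small. I would choose $w_1\in D(\zeta_1,r)$ with $|w_1|<1$; since the face $\{z\in\overline{D^n}:|z_1|=1\}$ is a closed subset of $\partial D^n$ on which $z\mapsto\delta(z_1-w_1)^{-1}$ is continuous and bounded by $\delta/(1-|w_1|)$ (as $|z_1|=1\neq|w_1|$ there), Tietze's extension theorem provides an $h\in C(\partial D^n)$ equal to this formula on that face and with $\|h\|_{\infty}\le\delta/(1-|w_1|)$, which is small once $\delta$ is small; hence $g:=f+h\in S$. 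If $F,G$ are the corresponding $M$-bounded extensions on $V\cup\partial D^n$, then $G-F=h$ on $\partial D^n$, and near $\zeta$ this equals $\delta(z_1-w_1)^{-1}$. Slicing in the first variable and applying the Schwarz reflection principle across an arc of $\T$ around $\zeta_1$, exactly as in Theorem~\ref{OneSidedTorus}, forces $G-F=\delta(z_1-w_1)^{-1}$ on $(D\cap D(\zeta_1,r))\setminus\{w_1\}$, contradicting that $G-F$ is bounded by $2M$ while the right-hand side blows up at the interior point $w_1$. The separately continuous and Lipschitz variants are handled identically, and item~2 follows because its collection contains that of item~1 and is contained in that of item~3.

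The step I expect to require the most care is the construction of the perturbation $h$: unlike in Theorem~\ref{OneSidedTorus}, the explicit pole function $\delta(z_1-w_1)^{-1}$ is not globally continuous on $\partial D^n$ (it has poles on the faces where some $|z_j|=1$, $j\ge2$, and $z_1=w_1$), so one must patch it via Tietze while simultaneously keeping its pole $w_1$ strictly inside the unit disc, so that a bounded one-sided extension into $D^n$ genuinely cannot absorb it, and ensuring that the patched $h$ still agrees with the explicit formula on a full $\partial D^n$-neighborhood of $\zeta$, so that the Schwarz-reflection argument on the first-variable slice goes through. Verifying these compatibility requirements, and checking that the generic boundary points are dense and that $\partial D^n$ reduces near them to the product model $\T\times(\text{polydisc})$, are the only genuinely new points beyond Theorem~\ref{OneSidedTorus}.
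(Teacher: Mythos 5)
Your proposal is correct and follows essentially the same route as the paper: reduce to a dense set of generic boundary points with exactly one coordinate of modulus $1$, take the pole $w$ with $|w_1|<1$ and the hyperplane $z_1=w_1$ disjoint from $\partial D^n$ near $\zeta$, patch $\delta(z_1-w_1)^{-1}$ onto all of $C(\partial D^n)$ via Tietze, and conclude by the one-variable Schwarz-reflection slicing of Theorem \ref{OneSidedTorus}. The only difference is cosmetic (you Tietze-extend from the whole face $\{|z_1|=1\}$ rather than from $\overline{B(\zeta,r')}\cap\partial D^n$ as in Theorem \ref{TheoremWithClosedSet}), and you supply some details the paper leaves implicit.
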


\begin{proof}The proof is similar to that of Theorem \ref{OneSidedTorus}, the difference being in how the set $S$ is shown to have empty interior. We use the same notation as in the proof of that Theorem, and fix $\zeta=(\zeta_1,...,\zeta_n)\in \partial D^n$. For a single $i$, $|\zeta_i|=1$; say $|\zeta_1|=1$ for simplicity, and let $0<r<1-|\zeta_i|$ for all $i\neq 1$ and $U=D(\zeta_1,r)\times\cdots\times D(\zeta_n,r)$. Pick $w\in D^n\cap U$ with $|w_1|<1$ and let the hyperplane $H$ be given by $z_1=w_1$. The set $H$ is disjoint from $U\cap \partial D^n$ since if $z=(z_1,...,z_n)\in U\cap H$ then $|z_1|=|w_1|<1$ and for $i\neq 1$, $|z_i-\zeta_i|<r\implies |z_i|<|\zeta_i|+r<1$, hence $z\in D^n$ and is not in $\partial D^n$. 

Following the argument in the proof Theorem \ref{TheoremWithClosedSet} involving the Tietze Extension Theorem, we can prove that the function $(z_1,...,z_n)\mapsto \delta/(z_1-w_1)$ defined on $U\setminus H$ has a continuous extension on $\partial D^n$. The proof is then completed using the reflexion principle in each variable separately, as in the proof of Theorem \ref{OneSidedTorus}.\end{proof}

Note that the same result holds for extensions on the outer side of the polydisc. The proof is even simpler, as the point $w$ is taken in $(\overline{D^n})^c$, with $|w_1|>1$, and hence the hyperplane $z_1=w_1$ does not intersect $\partial D^n$. So the Tietze Extension Theorem argument need not be used in this case.\medbreak

We end this section with the case of the sphere: A function $f:\partial B^n\to \C$ is holomorphically extendable from one side at a point $\zeta\in \partial B^n$, if there is a an open neighborhood $U$ of  $\zeta$ and a continuous function $F:[U\cap (B^n)^c]\cup \partial B^n\to \C$ that is holomorphic on $U\cap (B^n)^c$ and agrees with $f$ on $\partial B^n$. As always, we have the two associated notions of separately continuous and Lipschitz continuous extensions.

\begin{thm}\label{OneSidedSphere}
\begin{itemize}\item[1.]The set of functions $C(\partial B^n)$ that are not holomorphically extendable from one side in the separate continuous sense at all points of $\partial B^n$, is a dense $G_{\delta}$ of $C(\partial B^n)$.\smallbreak
\item[2.] The set of functions in $C(\partial B^n)$ that are not holomorphically extendable from one side (in the usual sense) at all points of $\partial B^n$ is residual in $C(\partial B^n)$.\smallbreak
\item[3.] The set of functions in $C(\partial B^n)$ that are not holomorphically extendable from one side in the Lipschitz continuous sense at all points of  $\partial B^n$, is a dense $G_{\delta}$ in $C(\partial B^n)$.\end{itemize}\end{thm}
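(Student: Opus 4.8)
The plan is to mimic the proofs of Theorems \ref{OneSidedTorus} and \ref{OneSidedPolydiscFullBoundary}, the one genuinely new feature being that $\partial B^n$ is not a product, so the one-variable slicing must be carried out along a well-chosen coordinate direction. Exactly as in those theorems, a Lipschitz extension is continuous and a continuous extension is separately continuous, so ``not separately-continuously one-sidedly extendable at $\zeta$'' implies ``not one-sidedly extendable at $\zeta$'', which in turn implies ``not Lipschitz-one-sidedly extendable at $\zeta$''; hence the collection in item 2 contains the one in item 1 and is contained in the one in item 3, so it suffices to prove items 1 and 3, which run in parallel.

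Fix $\zeta\in\partial B^n$ and a coordinate index $i$ with $\zeta_i\neq 0$ (one exists since $\|\zeta\|=1$). For $M,r\in(0,+\infty)$ let $S=S_{M,r}$ consist of the $f\in C(\partial B^n)$ admitting an extension $F\colon[B(\zeta,r)\cap(B^n)^c]\cup\partial B^n\to\C$ that is holomorphic on $B(\zeta,r)\cap(B^n)^c$, equal to $f$ on $\partial B^n$, bounded by $M$, and --- in item 1 --- separately continuous in the ambient variables $z_1,\dots,z_n$ (respectively, in item 3, Lipschitz with constant at most $M$). The structural point is that if the coordinates $z_k$, $k\neq i$, are frozen near $\zeta_k$, then the slice $\{z_i:(z_1,\dots,z_n)\in B(\zeta,r)\cap(B^n)^c\}$ is, near $\zeta_i$, the exterior region $\{|z_i|>\rho\}$ intersected with a small disc about $\zeta_i$, where $\rho=(1-\sum_{k\neq i}|z_k|^2)^{1/2}$ stays close to $|\zeta_i|>0$, and the corresponding slice of $\partial B^n$ is the arc of the analytic circle $|z_i|=\rho$ lying in that disc. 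The holomorphic map $z_i\mapsto\rho^2/z_i$ carries this exterior slice biholomorphically onto a region of the same type as the lunes appearing in the proof of Theorem \ref{OneSidedTorus}, maps the circle $|z_i|=\rho$ onto itself, and preserves boundedness; consequently the Poisson-representation argument of \cite{BigPaper} (Proposition~6.4, invoked in the proof of Theorem \ref{OneSidedTorus}) and the Schwarz reflection principle apply to these slices verbatim.

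With this in hand, $S$ is closed exactly as in Theorem \ref{OneSidedTorus}: for $f_m\to f$ with extensions $F_m$, Montel's Theorem yields a subsequence converging locally uniformly on $B(\zeta,r)\cap(B^n)^c$ to a holomorphic $F$ bounded by $M$, extended by $f$ on $\partial B^n$; separate continuity of $F$ up to $\partial B^n$ follows slice-by-slice from the Proposition~6.4 argument transported through $z_i\mapsto\rho^2/z_i$, while in item 3 one replaces Montel by the Arzel\`a--Ascoli Theorem (the $F_m$ being bounded and equicontinuous) to obtain uniform convergence on $[B(\zeta,r)\cap(B^n)^c]\cup\partial B^n$ to a Lipschitz limit. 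That $S$ has empty interior is the crux. If $f$ were an interior point, every $g\in C(\partial B^n)$ with $\|g-f\|_{\infty,\partial B^n}$ small would lie in $S$. Choose $r'<r$ small enough that $B(\zeta,r')\cap(B^n)^c$ is connected, then $\eta\in(0,r')$, set $w=(1+\eta)\zeta\in B(\zeta,r')$ and $h(z)=\delta\,(\langle z,\zeta\rangle-(1+\eta))^{-1}$; since $|\langle z,\zeta\rangle|\le\|z\|\le1$ on $\overline{B^n}$, the hyperplane $H=\{\langle z,\zeta\rangle=1+\eta\}$ is disjoint from $\overline{B^n}$, so $h$, being holomorphic off $H$, is holomorphic on $[B(\zeta,r')\cap(B^n)^c]\setminus H$, is bounded by $\delta/\eta$ on $\partial B^n$ (so that $g:=f+h\in S$ once $\delta$ is small), and blows up as $z\to w$. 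Let $F,G$ be the extensions of $f,g$ furnished by $S$, restricted to the $r'$-ball, so that $G-F=h$ on $\partial B^n$; slicing in the $z_i$-variable and reflecting across the circular boundary arcs (as in Theorem \ref{OneSidedTorus}, via the map above) gives $G-F=h$ on a thin one-sided neighbourhood of $\partial B^n$ near $\zeta$ which, being close to $\overline{B^n}$, avoids $H$; since $[B(\zeta,r')\cap(B^n)^c]\setminus H$ is connected (a real-codimension-two set has been removed from a connected open set) and $G-F$ is holomorphic on all of $B(\zeta,r')\cap(B^n)^c$, analytic continuation forces $G-F=h$ throughout $[B(\zeta,r')\cap(B^n)^c]\setminus H$, and letting $z\to w$ off $H$ contradicts $|G-F|\le 2M$.

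Thus each $S_{M,1/k}$ is closed with empty interior, so the set of $f$ not one-sidedly extendable at $\zeta$ (in the relevant sense) is $\bigcap_{M,k\in\N_+}S_{M,1/k}^c$, a countable intersection of dense open sets, hence a dense $G_\delta$ by Baire's Theorem; intersecting over a countable dense $Z\subseteq\partial B^n$ preserves this, and since for each fixed $f$ the set of points of $\partial B^n$ where $f$ is one-sidedly extendable is open in $\partial B^n$, its being disjoint from the dense set $Z$ forces it to be empty --- which proves items 1 and 3, and then item 2 follows by the sandwiching. I expect the main obstacle to be precisely this reduction of the non-product sphere to the one-dimensional boundary-regularity tools of \cite{BigPaper}: one must check with care that slicing in a coordinate with $\zeta_i\neq 0$ genuinely yields, near $\zeta$, honest one-sided one-variable regions (exteriors of circular arcs), that $z_i\mapsto\rho^2/z_i$ turns them into the regions already handled in Theorem \ref{OneSidedTorus}, and that the dependence of $\rho$ on the frozen coordinates is harmless; the rest is a routine transcription of Theorems \ref{OneSidedTorus} and \ref{OneSidedPolydiscFullBoundary}.
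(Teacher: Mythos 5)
Your argument is correct and follows the same Baire-category skeleton as the paper's proof, which simply declares the result ``analogous'' to Theorems \ref{OneSidedTorus} and \ref{OneSidedPolydiscFullBoundary} and records only the choice of separating hyperplane. That choice is where you genuinely diverge. The paper normalizes $\zeta=(1,0,\dots,0)$, places $w=(1-r,0,\dots,0)$ \emph{inside} the ball and uses the coordinate hyperplane $H=\{z_1=1-r\}$; this $H$ does meet $\partial B^n$, but only at distance exactly $\sqrt{2r}$ from $\zeta$, so near $\zeta$ the pole function is continuous on the boundary and the Tietze extension step of Theorem \ref{OneSidedPolydiscFullBoundary} is still required to produce an element of $C(\partial B^n)$; the contradiction then comes from the points of $H$ lying in $(B^n)^c\cap B(\zeta,r)$ farther out. (Incidentally, the paper's radius $\sqrt{3r}$ is a slip: $H\cap\partial B^n$ sits at distance $\sqrt{2r}<\sqrt{3r}$ from $\zeta$, so a radius below $\sqrt{2r}$ must be used there.) You instead take $w=(1+\eta)\zeta$ \emph{outside} $\overline{B^n}$ and $H=\{\langle z,\zeta\rangle=1+\eta\}$, which misses $\overline{B^n}$ altogether, so $h$ is holomorphic on a neighborhood of $\overline{B^n}$, automatically continuous and bounded by $\delta/\eta$ on all of $\partial B^n$ (no Tietze needed), and its singularity at $w$ lies in the one-sided region where the extensions live --- which is more faithful to the geometry of the proof of Theorem \ref{OneSidedTorus}, where the pole sits on the side of the extension. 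Your version is cleaner at this step, and your explicit treatment of the slicing (coordinate slices of $(B^n)^c$ with $\zeta_i\neq0$ are exteriors of circles of radius $\rho$ depending on the frozen coordinates, carried to the interior case by $z_i\mapsto\rho^2/z_i$ so that Proposition 6.4 of \cite{BigPaper} and Schwarz reflection apply) fills in details the paper leaves implicit; the remaining ingredients (Montel and Arzel\`a--Ascoli for closedness, connectivity after removing a real-codimension-two hyperplane, Baire's Theorem and a countable dense set of boundary points) match the paper's.
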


\begin{proof}The proof is analogous to that of Theorem \ref{OneSidedPolydiscFullBoundary}. To show the existence of the desired hyperplane $H$, note that due to the homogeneity of the sphere, we can take $\zeta=(1,0,...,0)$. If $w=(1-r,0,....,0)$ for very small $r>0$, then the hyperplane defined by $z_1=1-r$ passes through $w$ but does not intersect $\partial B^n$ at any point of $B(\zeta,\sqrt{3r})$. The rest of the proof is entirely similar. \end{proof}

Just as in the case of $\partial D^n$, the same result holds for extensions from the outer side of $\partial B^n$, that is extensions in $(\overline{B^n})^c$.\medbreak

\noindent \textbf{Acknowledgment}: We would like to thank Professor V. Nestoridis for bringing these problems to our attention, and for his always useful suggestions and help. We would also like to thank Professor T. Hatziafratis for taking an interest in this paper.

\end{document}